\theoremstyle{plain}
\newtheorem{thm}{Theorem}
\newtheorem{cor}{Corollary}
\newtheorem{lem}{Lemma}
\newtheorem{prop}{Proposition}
\theoremstyle{definition}
\newtheorem{defn}{Definition}
\theoremstyle{remark}
\theoremstyle{remark}
\newtheorem{oss}{\textbf{Remark}}
\newcommand{\R}{\mathbb{R}}
\newcommand{\Z}{\mathbb{Z}}
\newcommand{\N}{\mathbb{N}}
\newcommand{\1}{\sqrt{\frac{\vert \log \varepsilon \vert}{\varepsilon}}}
\newcommand{\3}{\varepsilon\vert \log \varepsilon \vert}
\def\e{\varepsilon}
\begin{document}
\title{\textbf{Static, Quasi-static and Dynamic Analysis for Scaled Perona-Malik Functionals}}
\author{
\scshape{ Andrea Braides}\\
Università di Roma ``Tor Vergata''\\
\texttt{braides@mat.uniroma2.it}\\
\\
\scshape{Valerio Vallocchia} \\
Università di Roma ``Tor Vergata''\\
\texttt{vallocch@mat.uniroma2.it}\\}
\date{}
\maketitle
\begin{abstract}
We present an asymptotic description of local minimization problems, and of quasi-static and dynamic evolutions of  scaled Perona-Malik functionals.
The scaling is chosen such that these energies $\Gamma$-converge to the Mumford-Shah functional by a result by Morini and Negri \cite{otto}. 
\end{abstract}

 \section{Introduction}
 The Perona-Malik {\em anisotropic diffusion} technique in Image Processing 
 \cite{PM} is formally based on a gradient-flow dynamics related to the non-convex energy
 \begin{equation}\label{PM-cont}
F_{{\scriptstyle P\!M}}(u)=\int_\Omega \log \Bigl(1+{1\over K^2}|{\nabla u}|^2\Bigr)\,dx,
 \end{equation}
 where $u$ represents the output signal or picture defined on $\Omega$ and $K$ a tuning parameter. 
 In the convexity domain of the energy function; i.e., if $|\nabla u|\le K$ the effect of the gradient 
 flow is supposed to smoothen the initial data, while on discontinuity sets where $|\nabla u|=+\infty$
 the gradient of the energy is formally zero and no motion is expected. In reality, such a gradient flow
 is ill-posed and even in dimension one we may have strong solutions only for some certain classes of initial data, 
 or weak solutions which develop complex microstructure. However, the anisotropic diffusion technique is always applied
 in a discrete or semi-discrete context, where energy $F_{{\scriptstyle P\!M}}$ is only a (formal)
 continuum approximation of some discrete energy defined on some space of finite elements or
 in a finite-difference context. Indeed, it is well known that convex-concave energies, which give
 ill-posed problems if simply extended from a discrete to a continuum context, are related to
 well-posed problems in a properly defined passage discrete-to-continuum. In a static framework, the 
 prototype of this argument dates back to the analysis of Chambolle \cite{Ch}, who showed that the 
 Blake-Zisserman weak-membrane discrete energy (involving truncated quadratic potentials) \cite{BZ} 
can be approximated by the Mumford-Shah functional \cite{MS}. The latter functional (together with its anisotropic variants) is a common continuum approximation of a class of lattice energies with convex-concave
energy functions, which also comprises atomistic energies such as Lennard-Jones ones \cite{BLO}
and the discretized version of the Perona-Malik functional itself as showed by Morini and Negri \cite{otto}
(see also \cite{tre} Section 11.5). The approximation of these lattice energies is performed by
considering the lattice spacing $\e$ as a small parameter and suitably scaling the energies. 
In the case of the Perona-Malik discretized energy on the cubic lattice $\e\Z^n$ the scaled functionals
$$
F_{\varepsilon}(u):=\sum_{i,j} \frac{\e^{n-1}}{|\log \varepsilon |} \log\left(1+|\log \varepsilon |\frac{
|u_i-u_j|^2}{\varepsilon}\right)
$$
($u_i$ denotes the value of $u$ at $i\in\e\Z^n$ and the sum is performed on nearest neighbours)
$\Gamma$-converge to a Mumford-Shah energy, with an anisotropic surface energy density in
dimension larger than one \cite{otto,tre}. This means that the solutions to global minimization 
problems involving $F_\e$, identified with their piecewise-constant interpolations, converge as
$\e$ tends to zero to the solutions to the corresponding global minimization problems involving
the Mumford-Shah functional. Examples of such global minimum problems comprise problems 
in Image Processing with an additional lower-order fidelity term (typically an $L^2$-distance 
of $u$ from the input datum $u_0$).

In this paper we analyze how much this approximation procedure can be extended beyond the global-minimization standpoint by examining the one-dimensional case. 
It is known that $\Gamma$-convergence cannot be easily extended as a theory 
to the analysis of the behaviour of local minima or to a dynamical setting beyond, essentially,
the ``trivial'' case of convex energies \cite{due,BCGS}. However, several recent examples suggest that 
for problems with concentration some quasistatic and dynamic models are compatible with 
$\Gamma$-convergence (such as for Ginzburg-Landau \cite{SS} or for Lennard-Jones \cite{cinque}
energies). We show that this holds also for one-dimensional quasistatic and dynamic problems obtained 
as minimizing movements along the family $F_\e$ \cite{due}. They coincide with the corresponding 
problems related to the one-dimensional Mumford-Shah functional  
\begin{align}
\label{ms}
M_s(u)=\int_0^1 |u'|^2+\#(S(u))
\end{align}
defined on piecewise $H^1$-function, where $S(u)$ is the set of discontinuity points of $u$. 
We note the difficulty of the extension to dimension larger or 
equal than two, for which a characterization of minimizing movements for the Mumford-Shah functional is still lacking \cite{AB}. 

In the quasistatic case, our analysis relies on a modeling assumption, that amounts to considering as dissipated the energy beyond the convexity threshold. Again, we show that the Mumford-Shah energy is an approximation of $F_\e$ also in that framework.
For an analysis of the quasistatic case in dimension larger than one within its application to Fracture Mechanics we refer to \cite{BFM}.

When local minimization is taken into account, then we show that indeed for some classes of problems the pattern of local minima of $F_\e$ differ from that of $M_s$. The computation of the $\Gamma$-limit can nevertheless be used as a starting point for the construction of ``equivalent theories'', which keep the simplified form of the $\Gamma$-limit but maintain the pattern of local minima. This process has been 
formalized in \cite{BT}. In our case we prove the $\Gamma$-equivalence of energies of the form
\begin{equation}
\label{mse}
G_\e(u)=\int_0^1 |u'|^2dx+\sum_{x\in S(u)} \frac{1}{\vert \log\varepsilon \vert}g\left(\1 |u^+-u^-|\right)
\end{equation}
with $g$ a concave function with $g'(0)=1$ and $g(w)\sim 2\log w$ for $w$ large.
Another case in which the Mumford-Shah functional is not a good approximation of the scaled Perona-Malik is for the long-time behaviour of gradient-flow dynamics, as we briefly illustrate in the final section.

%
\section{The scaled Perona-Malik functional}
We consider a one-dimensional system of $N$ sites with nearest-neighbour interactions. 
Let  $\varepsilon=1/N$ denote the {\em spacing parameter} and  let $u:=\left(u_0,\dots,u_{N}\right)$ be a  function defined on the lattice $I_{\varepsilon}=\varepsilon\mathbb{Z}\cap[0,1]$. If taking $\e$ as parameter, we also denote $N=N_\e$.

We define the {\em scaled one-dimensional Perona-Malik} functional as 
\begin{align}
\label{pm}
F_{\varepsilon}(u):=\sum_{i=1}^{{N_\varepsilon}} \frac{1}{|\log \varepsilon |} \log\left(1+|\log \varepsilon |\frac{
|u_i-u_{i-1}|^2}{\varepsilon}\right).\end{align}

The behaviour of global minimum problems involving $F_\e$ as $\e\to 0$ can be described through 
the computation of their $\Gamma$-limit. To that end, we define the {\em discrete-to-continuum convergence} $u_\e\to u$ as the $L^1$-convergence of the piecewise-constant interpolations $u_\e(x)= (u_\e)_{\lfloor x/\e\rfloor}$ to $u$.

\begin{thm}[Morini and Negri \cite{otto}]\label{MNth}
The domain of the $\Gamma$-limit of the functionals $F_\e$ as $\e\to 0$ is the space of piecewise-$H^1$ functions on which it coincides with the Mumford-Shah functional $M_s$ defined in {\em(\ref{ms})}.
\end{thm}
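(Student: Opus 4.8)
The plan is the classical three-step scheme for $\Gamma$-convergence with respect to the discrete-to-continuum convergence introduced above: a compactness statement identifying the domain, the $\Gamma$-$\liminf$ inequality, and the construction of recovery sequences. Write $f_\varepsilon(z)=\frac1{|\log\varepsilon|}\log\bigl(1+|\log\varepsilon|\,z^2/\varepsilon\bigr)$, so that $F_\varepsilon(u)=\sum_i f_\varepsilon(u_i-u_{i-1})$. Two features of $f_\varepsilon$ govern everything: near $z=0$ it is convex with $f_\varepsilon(z)\sim z^2/\varepsilon$, whereas it turns concave past the threshold $|z|=\2$ and, on increments of order one, saturates, $f_\varepsilon(z)=\frac1{|\log\varepsilon|}\bigl(|\log\varepsilon|+O(\log|\log\varepsilon|)\bigr)\to1$. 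One may always reduce to sequences with $\|u_\varepsilon\|_{L^\infty}$ uniformly bounded, since truncation at a fixed level is a $1$-Lipschitz retraction and hence does not increase $F_\varepsilon$; this also gives $f_\varepsilon(u_i-u_{i-1})\le1+o(1)$ on \emph{every} bond.

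For the recovery sequence — the easy direction — given a piecewise-$H^1$ function $u$ with jump set $S(u)=\{x_1,\dots,x_m\}$, I would (after a small perturbation so that each $x_j$ lies strictly inside a lattice cell) take $u_\varepsilon$ equal to $u$ sampled on $I_\varepsilon$ away from the $x_j$, and let each jump be carried by the single bond straddling $x_j$. On the bulk bonds $\log(1+t)\le t$ gives $f_\varepsilon(u_i-u_{i-1})\le|u_i-u_{i-1}|^2/\varepsilon$, and since $|u_i-u_{i-1}|^2\le\varepsilon\int_{(i-1)\varepsilon}^{i\varepsilon}|u'|^2$ these sum to at most $\int_0^1|u'|^2$; each jump bond contributes $f_\varepsilon(|u^+(x_j)-u^-(x_j)|)\to1$. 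Hence $\limsup_\varepsilon F_\varepsilon(u_\varepsilon)\le M_s(u)$, and the construction applies to any piecewise-$H^1$ target directly.

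The compactness statement and the $\liminf$ inequality are the substance. Given $u_\varepsilon\to u$ with $\sup_\varepsilon F_\varepsilon(u_\varepsilon)<\infty$, I would group the bonds by the size of the increment. On the ``elastic'' bonds, those with $|u_i-u_{i-1}|$ slightly below $\2$ (so that $|\log\varepsilon|\,|u_i-u_{i-1}|^2/\varepsilon\to0$ uniformly), the estimate $\log(1+t)\ge t-t^2/2$ gives $f_\varepsilon(z)\ge(1-o(1))\,z^2/\varepsilon$; summing, this part controls a discrete Dirichlet energy, which yields equi-$H^1$ bounds away from the remaining bonds and, by lower semicontinuity of the quadratic form, $\liminf\ge\int_0^1|u'|^2$ for the elastic contribution. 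The remaining bonds I would split dyadically by increment size: the number of bonds with $|u_i-u_{i-1}|$ in a fixed dyadic range is $\lesssim F_\varepsilon(u_\varepsilon)|\log\varepsilon|$ divided by the logarithmic cost of one such bond, and summing over dyadic scales shows that these bonds have bounded total variation and accumulate, in the limit, only at finitely many points — so the limit lies in the domain of $M_s$.

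The crux is then to show that the cluster of non-elastic bonds sitting near each of these limit points pays at least $1-o(1)$. Because $f_\varepsilon$ is superadditive below $\2$ and subadditive above it, minimizing $\sum_j f_\varepsilon(z_j)$ under $\sum_j z_j\ge s$ reduces (as in Chambolle's convex–concave analysis \cite{Ch}) to evaluating $\inf_{k\ge1} k\,f_\varepsilon(s/k)$; the key point is that this infimum tends to $1$ once one also requires the cluster to be microscopic ($k\varepsilon\to0$), since the unconstrained infimum vanishes but only in the regime of a transition spread over a macroscopic length, which feeds $\int_0^1|u'|^2$ rather than the jump count. Making this dichotomy quantitative is the main obstacle; once it is in place, summing the elastic and the cluster lower bounds over the finitely many limit jump points and letting the auxiliary parameters go to zero gives $\liminf_\varepsilon F_\varepsilon(u_\varepsilon)\ge M_s(u)$, which together with the recovery sequences completes the proof.
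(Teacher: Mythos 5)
Your proposal is an outline of the standard three-step scheme, but it does not close the one step that constitutes the actual content of the theorem, and you say so yourself (``making this dichotomy quantitative is the main obstacle''). Note first that the paper does not prove this statement at all: it is quoted from Morini and Negri \cite{otto}, and the text explicitly warns that the proof is technically complex precisely because the elastic/jump dichotomy does \emph{not} correspond to sorting bonds above or below the inflection point, contrary to the truncated-quadratic case of Chambolle \cite{Ch}. Your sketch runs into exactly that warning. Two concrete problems: (i) your ``elastic'' regime is inconsistent as stated --- for increments ``slightly below'' $\2$ the quantity $|\log\varepsilon|\,|u_i-u_{i-1}|^2/\varepsilon$ is of order $1$, not $o(1)$, so the expansion $\log(1+t)\ge t-t^2/2$ does not give $f_\varepsilon(z)\ge(1-o(1))z^2/\varepsilon$ there; to get the Dirichlet lower bound you must restrict to increments below a vanishing multiple of the threshold (in the paper's later notation, below $b_\varepsilon\sqrt{\varepsilon/|\log\varepsilon|}$ with $b_\varepsilon=(\varepsilon|\log\varepsilon|)^{1/4}$), which enlarges the intermediate regime you then have to dispose of. (ii) For that intermediate regime (increments between the convexity threshold and, say, $\varepsilon^{1/2-\delta}$, and more generally up to $c_\varepsilon=\varepsilon^{p(\varepsilon)}$) the cost per bond is neither comparable to $z^2/\varepsilon$ nor close to $1$, and the number of such bonds can be as large as $C|\log\varepsilon|$; your reduction to $\inf_k k\,f_\varepsilon(s/k)$ is of no use here because, as you observe, that infimum tends to $0$, so the lower bound cannot be decoupled from the Dirichlet part. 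Morini and Negri handle this by modifying the sequence so as to empty the intermediate set while decreasing the energy and moving it by $o(1)$ in $L^1$ --- exactly the construction with $b_\varepsilon$, $c_\varepsilon$, the sets $I^1,I^2,I^3$ and the telescoped functions $\tilde u_n$ that this paper reproduces later (Lemma \ref{lemma1}, the inequality \eqref{disuguaglianzaext}) for its compactness Lemma \ref{lemma2}. Without an argument of that type, your $\liminf$ inequality and the identification of the domain are not established, so the proposal remains a plausible strategy with a genuine gap at its core rather than a proof.
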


With the application of the Mumford-Shah functional to Fracture Mechanics in mind, 
by this result the Perona-Malik energy can be interpreted in terms of a mass-spring model 
approximation of  Griffith brittle-fracture theory. We will then refer in what follows to the 
quantities $u_i-u_{i-1}$ (or $w_i$ in the notation introduced below) as ``spring elongations''.

\smallskip

As a consequence of Theorem \ref{MNth} we easily deduce that minimum problems of the form
$$
\min\Bigl\{F_\e(u)+\alpha\sum_{i=0}^{N_\e}\e|u_i-u^0_i|^2\Bigr\}
$$
converge (both as minimum value and minimizers are concerned) to the minimum problem
$$
\min\Bigl\{M_s(u)+\alpha\int_0^1|u-u^0|^2\,dx\Bigr\},
$$
provided, e.g.,  that $u^0\in L^\infty(0,1)$ is such that the interpolations $\{u^0_i\}$ converge to $u^0$ \cite{otto}.

\smallskip
The heuristic explanation of why the scaling in (\ref{pm}) gives the Mumford-Shah functional is as follows. If the difference quotient $\e(u_i-u_{i-1})$ is bounded
then 
$$
|\log \varepsilon |\frac{
|u_i-u_{i-1}|^2}{\varepsilon}<\!<1
$$
so that 
$$
\frac{1}{|\log \varepsilon |} \log\left(1+|\log \varepsilon |\frac{
|u_i-u_{i-1}|^2}{\varepsilon}\right)\sim
\frac{
|u_i-u_{i-1}|^2}{\varepsilon}= \e \Bigl|\frac{u_i-u_{i-1}}{\varepsilon}\Bigr|^2,
$$
which gives a discretization of the Dirichlet integral. Conversely, if at an index
$i$ we have $|u_i-u_{i-1}|^2\sim c>0$ (corresponding to a jump point in the limit)
then 
$$
\frac{1}{|\log \varepsilon |} \log\left(1+|\log \varepsilon |\frac{
|u_i-u_{i-1}|^2}{\varepsilon}\right)\sim
\frac{1}{|\log \varepsilon |} \log\left(1+|\log \varepsilon |\frac{
c}{\varepsilon}\right)\sim 1.$$
The actual proof of Theorem \ref{MNth} is technically complex since the analysis of the two 
possible behaviours of discrete functions (as Dirichlet integral or as jump points) does not 
correspond exactly to examining the difference quotients above or below the inflection 
points (contrary to what can be done for truncated quadratic potentials \cite{Ch}).


\bigskip
\begin{figure}[h!]
\centerline{\includegraphics [width=2.5in]{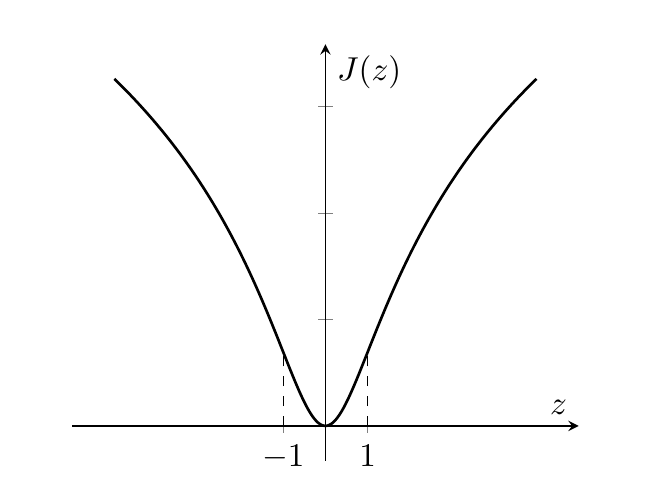} \includegraphics [width=3in]{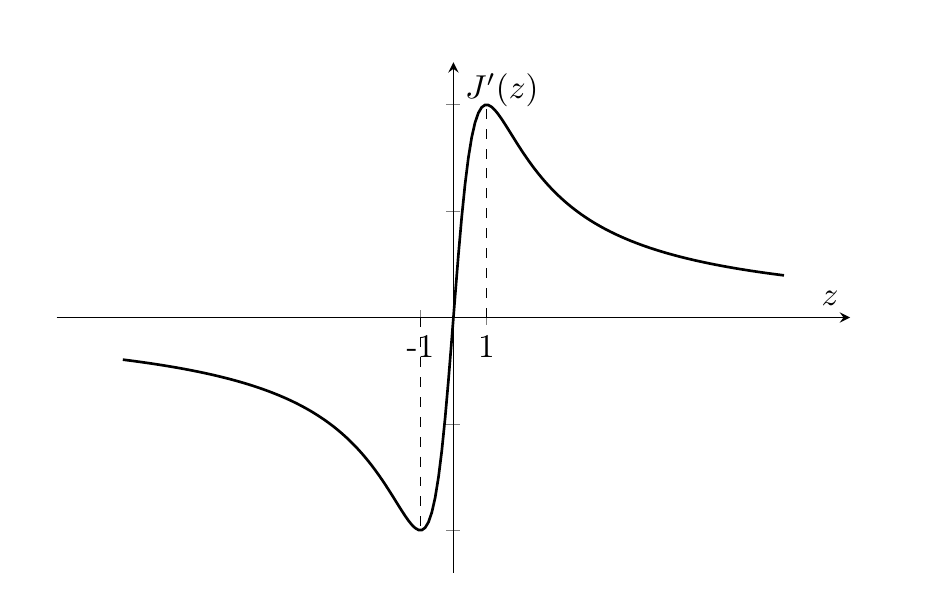}\qquad}
\caption{The potential $J$ and its derivative}\label{f1}
   \end{figure}
We find it convenient to rewrite \eqref{pm} in terms of the function 
$$J(z)=\log(1+z^2)$$ (see Fig.~\ref{f1}). 
The energy then reads
\begin{align}
\label{pm2}
&F_{\varepsilon}(u)=\sum_{i=1}^{N_{\varepsilon}} \frac{1}{|\log \varepsilon |} J\Bigl(\sqrt{\frac{|\log \varepsilon |}{\varepsilon}}(u_i-u_{i-1})\Bigr).
\end{align}

Note that the function $J$ satisfies:

$\bullet$ it is an even function;

$\bullet$ it is monotone increasing in $[0,+\infty)$ and monotone decreasing in $(-\infty,0]$;

$\bullet$ it is convex  in the interval $[-1,1]$ and is concave on $[1,+\infty)$;

$\bullet$ there exists a constant $c>0$ such that $J(z)\geq c\,z^2$ for all $|z|\leq 1$.

\section{Analysis of local minima}
$\Gamma$-convergence does not describe the behaviour of local minimum problems.
In this section we compute energies defined on piecewise-$H^1$ functions 
which are close to $F_\e$ in the sense of $\Gamma$-convergence 
and maintain the pattern of local minima.

\subsection{Local minima for $F_\e$ with Dirichlet boundary conditions}
We first characterize local minima of \eqref{pm} with Dirichlet  
boundary conditions $u_0=0$ and $u_N=\lambda$ for  $\lambda \in \mathbb{R}$.

\smallskip
The stationarity conditions for \eqref{pm2} read
\begin{align}
\label{stationary}
J'\left(\sqrt{\frac{|\log \varepsilon |}{\varepsilon}}
(u_i-u_{i-1})\right)=\sigma
\end{align}for some $\sigma\in\R$.

From now on we simplify the notation introducing the scaled variable
\begin{equation}\label{wii}
w_i:=\sqrt{\frac{|\log \varepsilon |}{\varepsilon}}(u_i-u_{i-1}).
\end{equation}
First, we observe that we can limit our analysis to the case  $w_i\geq 0$ for all $i=1,\dots,N$. 
In fact, $J(z)$ is even; moreover, for $w_i\geq 0$  also  $J'(w_i)\geq 0$, 
so that  if $w_i\geq 0$ for some $i$ then by \eqref{stationary} all the remaining $w_j$ must be non-negative. 
In the same way we can handle the case with $w_i<0$.

Now we characterize local minimizers in some different cases.

$\bullet$ Suppose that $w_i< 1$ for all $i$. Then the monotonicity of $J'$ in $[0, 1]$ implies  $w_i=w_j$ for all $i,j=1,\dots, N$. In particular, thanks to the boundary conditions, we have that 
\begin{align}
\frac{w_i}{\sqrt{\varepsilon |\log \varepsilon|} }=\lambda\ \ \ \ \ \Longrightarrow \ \ \ \ \ \lambda< \frac{1}{\sqrt{\varepsilon |\log \varepsilon|}}.\end{align}
This is a local minimum. In fact, for $w_i< 1$ the function is strictly convex, which means that $J''(w_i)>0$ for all $i$. 
We observe  that when $\lambda=0$ the only solution is the trivial one ($w_i\equiv 0$), which is a global minimum.

$\bullet$
 Not more than one index can satisfy $w_i>1$. Indeed, suppose that there exist two indices such that $w_1=w_2(=\overline{w})>1$ and consider a perturbed configuration $(v_0,\dots,v_N)$ such that, denoting \\
\begin{align*}
\tilde{v}_i:=(v_i-v_{i-1})\sqrt{\frac{|\log \varepsilon |}{\varepsilon}},
\end{align*}
we have \\
- $\tilde{v}_i=w_i \ \mbox{for all} \ i\ne 1,2$\\
- $\tilde{v}_1=\overline{w}+s$ and $\tilde{v}_2=\overline{w}-s$.\\
We observe that the difference between the energies of the  configurations is
\begin{align*}
f(s)=J(\overline{w}+s)+J(\overline{w}-s)-2J(\overline{w})
\end{align*}
and it is such that $f'(0)=0$ and $f''(0)=2J''(\overline{w})<0$.
This means that we have  a local maximum in $s=0$, so that we cannot have a local minimum for such configuration.

$\bullet$
 In  case $w_i=1$ for some $i$ then we have $w_j=1$ for all $j$, with the consequence  that $\lambda=\frac{1}{\sqrt{\varepsilon |\log \varepsilon|}}$. This is not a local minimum. Indeed, consider the perturbed energy
\begin{align}
f(t)=(N-1)J\left(1-\frac{t}{N-1}\right)+J(1+t).
\end{align}
Then we observe that
\begin{align*}
f'(0)=0,\qquad
f''(0)=0,\qquad
f'''(0)=J'''(1)\left(1-\dfrac{1}{(N-1)^2}\right)<0,
\end{align*}
so that $0$ is not a minimum for $f$.

$\bullet$
 Finally, we take into account the case with only one index exceeding the convexity threshold. Suppose that there exists an index $i$ such that 
$$\sqrt{\frac{\vert \log \varepsilon \vert}{\varepsilon}}(u_i-u_{i-1})=\sqrt{\frac{\vert \log \varepsilon \vert}{\varepsilon}}w>1.$$
Then, thanks to the boundary conditions, we can rewrite the energy of the system as
\begin{align*}
\tilde{F}_{\varepsilon}(w)=(N-1) J\left(\sqrt{\frac{\vert \log \varepsilon \vert}{\varepsilon}}\left(\frac{\lambda-w}{N-1}\right)\right)+ J\left(\sqrt{\frac{\vert \log \varepsilon \vert}{\varepsilon}} w\right),
\end{align*}
defined on the domain 
\begin{align}
\label{cond}
A=\Biggl\{ w\in \R^+ : w\geq \max \left\{\sqrt{\frac{\varepsilon}{\vert \log \varepsilon \vert}} , \lambda-(N-1)\sqrt{\frac{\varepsilon}{\vert \log \varepsilon \vert}}\right\}\Biggr\}.
\end{align}

To compute the values of $w$ we impose that $\tilde{F}_{\varepsilon}'(w)=0$. This gives
\begin{align*}
J'\left(\sqrt{\frac{\vert \log \varepsilon \vert}{\varepsilon}}\left(\frac{\lambda-w}{N-1}\right)\right)- J'\left(\sqrt{\frac{\vert \log \varepsilon \vert}{\varepsilon}} w\right)=0.
\end{align*}
We then obtain three solutions: $w=\varepsilon\lambda$ (and hence $w_i\equiv w_0$, which is not a local minimum) and
\begin{align}
\label{sol}
w_{1,2}&=\frac{\lambda}{2}\pm \sqrt{\frac{\lambda^2}{4}-\frac{1-\varepsilon}{\vert \log \varepsilon \vert}}.\end{align}
We observe that the solutions in \eqref{sol} are both positive for $\lambda >2\sqrt{\frac{1-\varepsilon}{\vert \log \varepsilon \vert}}$ but we need to check for which $\lambda$ they  belong to $A$. We get that
\begin{align}
w_1&=\frac{\lambda}{2}+ \sqrt{\frac{\lambda^2}{4}-\frac{1-\varepsilon}{\vert \log \varepsilon \vert}}\in A \qquad \Longleftrightarrow\qquad \lambda\geq 2\sqrt{\frac{1-\varepsilon}{\vert\log\varepsilon\vert}}\\
w_2&=\frac{\lambda}{2}- \sqrt{\frac{\lambda^2}{4}-\frac{1-\varepsilon}{\vert \log \varepsilon \vert}} \in A\qquad\Longleftrightarrow\qquad 2\sqrt{\frac{1-\varepsilon}{\vert \log \varepsilon \vert}}\leq\lambda\leq \frac{1}{\sqrt{\vert\log\varepsilon\vert}}.
\end{align}
Since we are interested in local minima, we have to verify that $\tilde{F}_{\varepsilon}''(w_i)>0$, which means
\begin{align*}
\tilde{F}_{\varepsilon}''(w_i)=\frac{1}{(N-1)} J''\left(\1\left (\frac{\lambda-w_i}{N-1}\right)\right)+J''\left(\1 w_i\right)>0.
\end{align*}
Simple computations show that
\begin{align*}
\tilde{F}''_{\varepsilon}(w_1)&>0 \Longleftrightarrow \lambda > 2 \sqrt{\frac{1-\varepsilon}{\vert \log \varepsilon \vert}}\\
\tilde{F}''_{\varepsilon}(w_2)&<0 
\end{align*}
So, we can finally  state that, when an index exceeds the convexity threshold, there exist only a local minimum for $\lambda > 2\sqrt{\frac{1-\varepsilon}{\vert\log\varepsilon\vert}}$.


\begin{figure}[h!]
\centerline{\includegraphics [width=4.5in]{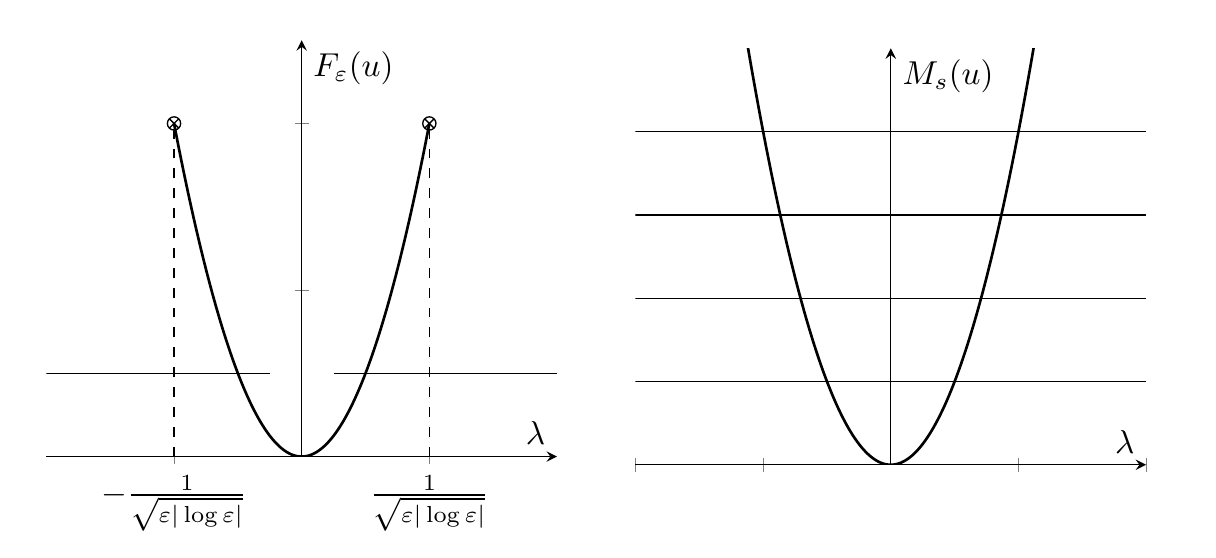}\qquad}
\caption{Perona-Malik and Mumford-Shah local minima}\label{Fig3}
   \end{figure}

\begin{oss}[local and global minima for the Mumford-Shah functional]
(a) We note that the pattern of local minima for the functional $M_s$ subjected to 
the boundary conditions $u(0-)=0$ and $u(1+)=\lambda$ differs from that of
$F_\e$. Indeed, we have 

$\bullet$ the function $u_\lambda(x)=\lambda x$, corresponding to the energy 
$M_s(u_\lambda)=\lambda^2$;

$\bullet$ all functions $u$ with $u'=0$, for which we have $M_s(u)=\#(S(u))$.

Note that for $\lambda=0$ we cannot have a local minimum of the second type 
with $M_s(u)=1$, while for all other $\lambda$ we have no restriction on the number
of jumps. A description of the energy of local minima in dependence of $\lambda$ 
and compared with those of Perona-Malik energies is pictured in Fig.~\ref{Fig3}.

\smallskip
(b) from the analysis above we trivially have that the (global) minimum energy in dependence
of the boundary datum $\lambda$ is $\min\{\lambda^2,1\}$, achieved on the linear function
$u_\lambda$ if $|\lambda|\le 1$ and on any function $u(x)=\lambda\chi_{[x_0,+\infty)}$ jumping
in $x_0$ if $|\lambda|\ge 1$.
\end{oss}

\subsection{$\Gamma$-equivalence}\label{gammaec}
In this section we propose a ``correction'' to the $\Gamma$-limit of $F_\e$.
In place of $M_s$ we want to compute functions $G_\e$ such that

$\bullet$ $G_\e$ maintain the structure of $M_s$; i.e., they are defined on piecewise-$H^1$ functions
and can be written as the sum of the Dirichlet integral and an energy defined on the jump set $S(u)$;

$\bullet$ the structure of local minima of $G_\e$ is the same as that of $F_\e$;

$\bullet$ $G_\e$ and $F_\e$ are ``equivalent'' with respect to $\Gamma$-convergence.

\smallskip
We recall the general definition of $\Gamma$-equivalence \cite{due}.

\begin{defn} Let $\left\{F_{\varepsilon}\right\}$ and $\left\{G_{\varepsilon}\right\}$ be sequences of functionals on a separable metric space $X$. We say that they are equivalent by $\Gamma$-convergence (or $\Gamma$-equivalent) if there exists a sequence $\left\{m_{\varepsilon}\right\}$ of real numbers such that, if $\left\{F_{\varepsilon_j}-m_{\varepsilon_j}\right\}$ and $\left\{G_{\varepsilon_j} -m_{\varepsilon_j} \right\}$ are $\Gamma$-converging sequences, their $\Gamma$-limits coincide and are proper (i.e., not identically $+\infty$ and not taking the value $-\infty$).
\end{defn}
In our case this definition simplifies: we may consider $m_{{\varepsilon}}\equiv 0$ and we look for functionals  which $\Gamma$-converge to $M_s$. We look for $G_\e$ of the form
\begin{align}
G_{\varepsilon}(u)=\int_0^1 |u'|^2 dt+\sum_{t\in S(u)}g_{\varepsilon}(|u^+-u^-|)
\end{align} 
with  boundary conditions $u^-(0)=0$, $u^+(1)=\lambda$.
 Furthermore, the scaling argument in the definition of  $F_\e$ 
suggests to look for $g_{\varepsilon}$ with the same scaling of the Perona-Malik functional, so that $$g_{\varepsilon}(z)=\frac{1}{\vert \log\varepsilon \vert}g\left(\1\, z\right).$$

In order to have functionals with the same structure of local minimizers as Perona-Malik's, we have to require  that, if $u$ is a minimizer, then $\#(S(u))\leq 1$. This condition surely holds when $g$ is concave:  in fact if $z_1$,$z_2$ are two points in $S(u)$, then the function $t\to g(w_1+t)+g(w_2-t)$ is still concave.
To ensure the $\Gamma$-convergence of $G_{\varepsilon}$ to $M_s$ we impose that
\begin{align*}
\lim_{\varepsilon\to 0}\frac{1}{\vert \log\varepsilon \vert}g\left(\1 z\right)=1.
\end{align*}
This is ensured by the condition 
$$
\lim_{w\to+\infty}{g(w)\over 2\log(w)}=1.
$$

Finally we require that $g(0)=0$ and that the slope of $g_{\varepsilon}$ in the origin be ${1}/{\sqrt{\varepsilon\vert \log \varepsilon \vert}}$, which is the value after which it is energetically convenient to introduce a fracture (further details for this argument can be found in \cite{quattro}). This means that
\begin{align*}
\left.\frac{1}{\vert \log \varepsilon \vert}g'\left(\1 w\right)\right|_{w=0}&=\dfrac{1}{\sqrt{\varepsilon\vert \log \varepsilon \vert}},
\end{align*}
which gives $ g'(0)=1$.

Summarizing, we have checked that the functionals $G_\e$ above are $\Gamma$-equivalent to $F_\e$
and maintain the same patter of local minima, provided that

$\bullet$ $g:[0,+\infty)\to[0,+\infty)$ is concave;

$\bullet$ $g(0)=0$, $g'(0)=1$ and $\displaystyle
\lim_{w\to+\infty}{g(w)\over 2\log(w)}=1$.

\section{Quasi-static motion}
In this section we compare quasistatic motion (also sometimes denoted as variational evolution) 
for $F_\e$ with that of the Mumford-Shah functional. We adopt as the definition of quasistatic motion
that of a limit of equilibrium problems involving energy and dissipation 
with varying boundary conditions. For more general definitions
and related discussion we refer to \cite{MR,BFM,due}. 

We consider a sufficiently regular function $h:[0,+\infty)\to\R$ with $h(0)=0$ and boundary conditions $u_0=0$, $u_N=h(t)$, which means that the function $h$ is describing the position of the endpoint of the $N$-th spring.

\begin{oss}[quasistatic motion of the Mumford-Shah functional with increasing fracture]\label{quaMS}
In the framework of Fracture Mechanics, the Dirichlet integral in the Mumford-Shah functional  is interpreted as an elastic energy and the jump term as a dissipation term necessary to create a crack. The {\em dissipation principle} underlying crack motion is that, once a crack is created, this jump term cannot be recovered. If we apply time-dependent boundary conditions $u(0,t)=0$ and $u(1,t)= h(t)$ then a solution is given by 
$$
u(x,t)= \begin{cases} 
h(t)x & \hbox{ if $t\le t_h$}\cr
h(t)\chi_{[x_0,1]}& \hbox{ if $t> t_h$,}
\end{cases}
$$
where $t_h=\inf\{t: h(t)>1\}$ and $x_0$ in any given point in $[0,1]$. 
With this definition the crack site $K(t)=\bigcup_{s\le t} S(u(\cdot,s))$ is 
non-decreasing with $t$ and $u(\cdot,t)$ is a global minimizer of the
Mumford-Shah energy on $(0,1)\setminus K(t)$. 
\end{oss}

In the case of the Perona-Malik functional we do not have a distinction between
elastic and fracture parts of the energy. 
We then assume the following dissipation principle, where those two parts
are identified with the convex and concave regions of the energy function, respectively.\\

\textbf{Dissipation Principle}: if for some $i$ the spring elongation $w_i$ in (\ref{wii}) 
overcomes the convexity threshold then the energy $J(w_i)$ cannot be recovered.\\

In analogy with the case of the quasistatic motion of the Mumford-Shah functional, this 
principle will be translated into modifying the total energy on indices $i$ for which 
$w_i$ have overcome the convexity threshold during the evolution process.

\smallskip
As mentioned above, the quasistatic motion of $F_\e$ can be defined through a time-discrete approximation
as follows. The analogous procedure for $M_s$ produces the solutions for the quasistatic motion of $M_s$ 
as in Remark \ref{quaMS}. 

We fix a {\em time step} $\tau>0$, and for all $k\in\N$ we consider the boundary conditions
$u_0=0$ and $u_N=h(k\tau)$, and the related ``time-parameterized'' minimum problems subjected 
to the Dissipation Principle stated above. We now analyze the properties of the corresponding
solutions. For simplicity of notation we will consider the case when $h\ge 0$.

Note preliminarily that by the convergence of the global minima of $F_\e$ to those of $M_s$ subjected to
Dirichlet boundary conditions, we deduce the existence of a threshold $\tilde{h}_{\varepsilon}$ beyond which 
it is energetically convenient that one elongation $w_i$ lies in the concavity domain of $J$. Note that $\tilde{h}_{\varepsilon}\to 1$ as $\e\to 0$.


We can point out different behaviours as follows

$\bullet$ If for all $k'<k$  we have $|h(k'\tau)|\le\tilde{h}_{\varepsilon}$
then the dissipation principle is not enforced
and the corresponding solution $u^k$ is the only minimizer for the energy \eqref{pm2} corresponding to
the interpolation of the linear function $h(k\tau)x$. Its energy is
\begin{align}
F_{\varepsilon}(u^k)=\frac{1}{\e\vert\log\varepsilon\vert}J\left(h(k\tau)\sqrt{\3}\right).
\end{align}

$\bullet$ Now suppose that at some $k'$ we have $|h(k'\tau)|>\tilde{h}_{\varepsilon}$.  
It is not restrictive to suppose that $h(k'\tau)>0$ (the negative case being treated symmetrically). Then,
there exists an index $i$ such that the $w_i$ corresponding to the solution exceeds $\tilde{h}_{\varepsilon}$. Without losing generality we will suppose $i=N$.

We have two possibilities.

1. If $h((k'+1)\tau)> h(k'\tau)$ then we have to minimize 
\begin{align*}
(N-1) J\Bigr(\1\Bigl({u_{N-1}\over N-1}\Bigr)\Bigr)+J\Bigr(\1(u_N-u_{N-1})\Bigr) 
\end{align*}
under the boundary conditions  $u_0=0$, $u_N=h(k\tau)$. Note that we have used the convexity of $J$
to simplify the contribution of the first $N-1$ interactions. Indeed, in this case
their common elongation is 
$$
u_i-u_{i-1}= {u_{N-1}-u_0\over N-1} = {u_{N-1}\over N-1}.
$$

The previous considerations  show that there exists a unique minimizer for $h(k\tau)>2\sqrt{(1-\varepsilon)/\vert\log\varepsilon\vert}$ and, denoted 
\begin{align*}
w:=\frac{h(k\tau)}{2}+\sqrt{\frac{h(k\tau)^2}{4}-\frac{1-\varepsilon}{\vert\log\varepsilon\vert}},
\end{align*}
the energy reads
\begin{align*}
F_{\varepsilon}(k)=\frac{1}{\vert\log\varepsilon\vert}\Bigl((N-1) J\Bigr(\1 \Bigr(\frac{h(k\tau)-w}{N-1}\Bigr)\Bigr)+J\Bigr(\1w\Bigr)\Bigr). 
\end{align*}
We can iterate this process as long as $k\mapsto h(k\tau)$ increases.
Note that in this case the application of the Dissipation Principle does not change 
the minimum problems since all $J(w_j)$ are increasing with $k$;

2. The function $k\mapsto h(\tau k)$ has a local maximum at $\bar{k}$. 
In this case the Dissipation Principle does force a change in the 
minimization problem.
As long as $h(\tau k)\le h(\tau \bar k)$ we have to minimize
\begin{align}
\label{probl}
(N-1)&J\Bigr(\sqrt{\frac{\vert\log\varepsilon\vert}{\varepsilon}}\Bigl({u_{N-1}\over N-1}\Bigr)\Bigr)+\\\notag
&\hskip-1cm+\Bigr(\underbrace{J\Bigr(\sqrt{\frac{\vert\log \varepsilon \vert}{\varepsilon}}
(u_N-u_{N-1})\Bigr)\vee J\Bigr(\sqrt{\frac{\vert\log \varepsilon \vert}{\varepsilon}}
(u_N^{\bar{k}}-u_{N-1}^{\bar{k}})\Bigr)\Bigr)}_*
\end{align}
with boundary conditions  $u_0=0$ and $u_N=h(k\tau)$ ($k>\bar{k}$, otherwise we are in the same assumption of (1)). Considering the part $(*)$ in (\ref{probl}) for the last term ensures that the energy spent for the elongation of the $N$-th spring is not reabsorbed.

We denote \begin{align*}
&\bar{w}:=u_N^{\bar{k}}-u_{N-1}^{\bar{k}},\qquad
 w_k:=u_N-u_{N-1}\\
 & z_k={u_{N-1}\over N-1}=\frac{h(k\tau)-w_k}{N-1}
\end{align*}
and rewrite \eqref{probl} as
\begin{align}
\label{probl2}
\min \Bigr\{(N-1)&J\Bigr(\sqrt{\frac{\vert\log\varepsilon\vert}{\varepsilon}}z_k\Bigr)+\Bigr(J\Bigr(\sqrt{\frac{\vert\log \varepsilon \vert}{\varepsilon}}
w_k\Bigr)\vee J\Bigr(\sqrt{\frac{\vert\log \varepsilon \vert}{\varepsilon}}
\bar{w}\Bigr)\Bigr)\Bigr\}\,.
\end{align}
In particular, we note that, when $w_k<\bar{w}$,  minimizing \eqref{probl2}  is equivalent to minimize the contribution of the first $N-1$ springs. Due to the convexity of the function $J$  on the $N-1$ springs, the energy reaches the minimum value  for the minimum value of $z_k$. Now observing that
\begin{align*}
z_k=\frac{h(k\tau)-w_k}{N-1} \geq \frac{h(k\tau)-\bar{w}}{N-1} 
\end{align*}
the minimum is reached for
\begin{align*} 
 z_k=\frac{h(k\tau)-\bar{w}}{N-1} ,
\end{align*}
so that the energy reads
\begin{align*}
F_{\varepsilon}(k)=\begin{cases}
\dfrac{1}{\vert\log\varepsilon\vert}\Bigl((N-1)J\Bigr(\sqrt{\frac{\vert\log\varepsilon\vert}{\varepsilon}}\Bigr(\frac{h(k\tau)-\bar{w}}{N-1}\Bigr)\Bigr)\cr\qquad\qquad\qquad\qquad\qquad +J\Bigr(\sqrt{\frac{\vert\log \varepsilon \vert}{\varepsilon}}
\bar{w}\Bigr)\Bigr)  & \mbox{if} \ h(k\tau)>\bar{w}\\[2ex]
\dfrac{1}{\vert\log\varepsilon\vert}J\Bigr(\sqrt{\frac{\vert\log \varepsilon \vert}{\varepsilon}}
\bar{w}\Bigr) &  \mbox{otherwise.}
\end{cases}
\end{align*}

This description holds as long as $|h(k\tau)|\le h(\bar{k}\tau)$, after which we return to 
case $1$ above.

\begin{figure}[h]
\begin{center}
\includegraphics[scale=0.9]{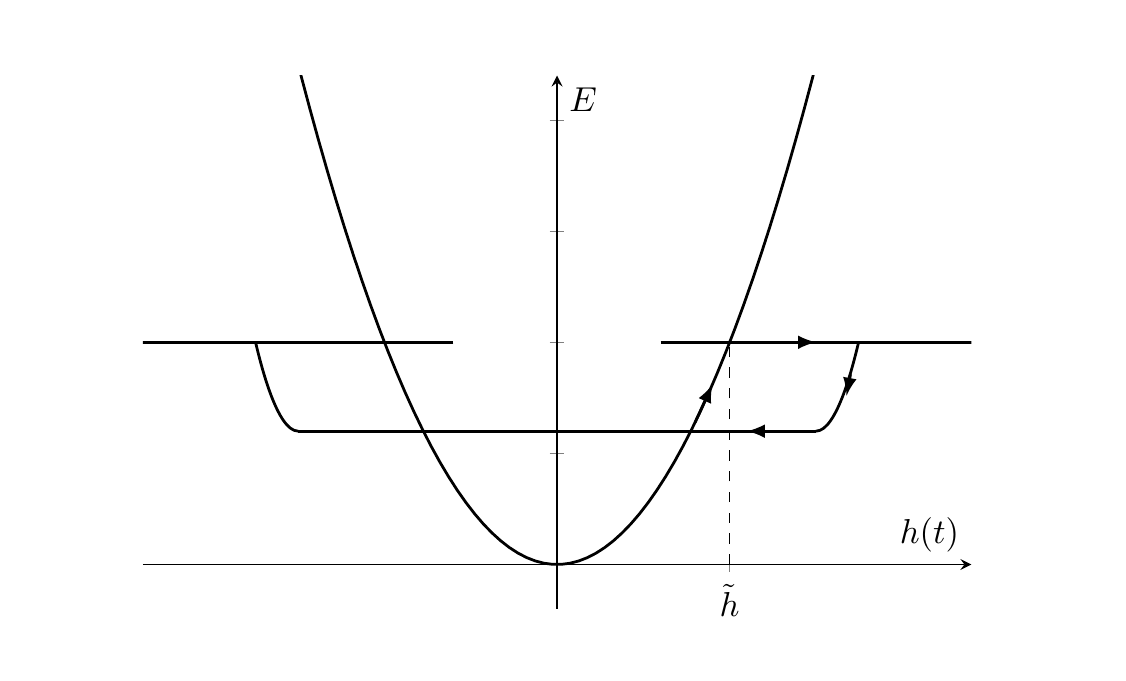}
\end{center}
\caption{Energy of a quasistatic evolution}\label{Fig4}
\end{figure}
\begin{oss}[comparison with the Mumford-Shah quasistatic motion]
We can test the quasistatic behaviour of the Perona-Malik system at fixed $\e$ with 
$h(t)= t_0-|t-t_0|$, and $t_0>1$, so that $t_0>\tilde h_\e$ for $\e$
small enough. If $E_{\tau,\e}^k$ denotes the minimal energy at fixed $\e$ and $\tau$,
by the description above we have
$$
E_{\tau,\e}^k
=\begin{cases} 
\dfrac{1}{\e\vert\log\varepsilon\vert}J\left(h(k\tau)\sqrt{\3}\right) & \hbox{ if } k\tau\le \tilde h_\e
\cr\cr
\dfrac{1}{\vert\log\varepsilon\vert}\Bigl((N-1) J\Bigr(\1 \Bigr(\frac{h(k\tau)-w}{N-1}\Bigr)\Bigr)\cr
\hskip4cm+J\Bigr(\1w\Bigr)\Bigr) & \hbox{ if }  \tilde h_\e<k\tau\le t_0
\cr\cr
\dfrac{1}{\vert\log\varepsilon\vert}\Bigl((N-1)J\Bigr(\sqrt{\frac{\vert\log\varepsilon\vert}{\varepsilon}}\Bigr(\frac{h(k\tau)-\bar{w}}{N-1}\Bigr)\Bigr)\cr
\hskip4cm+J\Bigr(\sqrt{\frac{\vert\log \varepsilon \vert}{\varepsilon}}
\bar{w}\Bigr)\Bigr)  & \mbox{ if } t_0<k\tau\le t_1\cr
\dfrac{1}{\vert\log\varepsilon\vert}J\Bigr(\sqrt{\frac{\vert\log \varepsilon \vert}{\varepsilon}}
\bar{w}\Bigr) & \mbox{ if } t_1<k\tau\le t_2\cr\cr
\dfrac{1}{\vert\log\varepsilon\vert}\Bigl((N-1)J\Bigr(\sqrt{\frac{\vert\log\varepsilon\vert}{\varepsilon}}\Bigr(\frac{h(k\tau)+\bar{w}}{N-1}\Bigr)\Bigr)
\cr
\hskip4cm+J\Bigr(\sqrt{\frac{\vert\log \varepsilon \vert}{\varepsilon}}
\bar{w}\Bigr)\Bigr)  & \mbox{ if } k\tau> t_2\,,
\end{cases}
$$
where $t_1, t_2>t_0$ are satisfy $h(t_1)=\bar w$  and  $h(t_2)=-\bar w$. The values $E_{\tau,\e}^k$
in dependence of $h(t)$ lie in the curves pictured in Fig.~\ref{Fig4}.

As $\tau\to 0$ and $\e\to 0$ the piecewise-constant functions defined by $E_{\tau,\e}(t)=E_{\tau,\e}^{\lfloor t/\tau\rfloor}$ converge to $E$ given by
\begin{align}
\label{Et}
E(t)=\begin{cases} |h(t)|^2 & \hbox{ if } t\le 1\cr
1 & \hbox{ if } t>1.\end{cases}
\end{align}
In fact for $k\tau\le \tilde h_\e$ the energy $E_{\tau,\e}^k$ reads
\begin{eqnarray*}
\dfrac{1}{\e\vert\log\varepsilon\vert}J\left(h(k\tau)\sqrt{\3}\right)&=&\dfrac{1}{\e\vert\log\varepsilon\vert}\log\left(1+h^2(k\tau)\e\vert\log\varepsilon\vert\right)\\&\sim&
\dfrac{1}{\e\vert\log\varepsilon\vert}h^2(k\tau)\e\vert\log\varepsilon\vert\rightarrow h^2(t).
\end{eqnarray*}
For $k\tau> \tilde h_\e$ the contribution of the $N-1$ springs in the convex part vanishes. We make the computation only for one contribution, the others being analogous:
\begin{eqnarray*}
&&\dfrac{1}{\vert\log\varepsilon\vert}(N-1) J\Bigr(\1 \Bigr(\frac{h(k\tau)-w}{N-1}\Bigr)\Bigr)\\
&=&
\dfrac{1-\e}{\e\vert\log\varepsilon\vert}\log\left(1+\dfrac{\vert\log\varepsilon\vert}{\e}\left(\dfrac{\e}{1-\e}\right)^2(h(k\tau)-w)^2\right)\\
&\sim&
\dfrac{1}{1-\e}(h(k\tau)-w)^2\\
&=&\dfrac{1}{1-\e}\left(\dfrac{h^2(k\tau)}{2}-\dfrac{1-\e}{\vert \log \e \vert}-h(k\tau)\sqrt{\dfrac{h^2(k\tau)}{4}-\dfrac{1-\e}{\vert \log \e \vert}}\right)\rightarrow 0.
\end{eqnarray*}
Finally, the spring in the non-convex part gives a constant contribution:
\begin{eqnarray*}
\dfrac{1}{\vert\log\varepsilon\vert}J\Bigr(\sqrt{\frac{\vert\log \varepsilon \vert}{\varepsilon}}w\Bigr)&=&\dfrac{1}{\vert\log\varepsilon\vert}\log\left(1+\dfrac{\vert\log\varepsilon\vert}{\e}w^2\right)\\
&\sim&\dfrac{1}{\vert\log\varepsilon\vert}\left(\log\left(\dfrac{\vert\log\varepsilon\vert}{\e}\right)+\log\left(w^2\right)\right)\\
&=&\dfrac{1}{\vert\log\varepsilon\vert}\left(\log(\vert\log\varepsilon\vert)+|\log(\e)|+\log\left(w^2\right)\right)\rightarrow 1.
\end{eqnarray*}

The energy $E(t)$ corresponds to the energy of the quasistatic motion of the  Mumford-Shah functional with increasing fracture. More general $h$ can be treated analogously
\end{oss}



\section{Dynamic analysis}
We will make use of the method of {\em minimizing movements} along the sequence of functionals $F_\e$ at a scale $\tau=\tau_\e\to 0$ \cite{due}. With varying $\tau$, minimizing movements describe the possible gradient-flow type evolutions along $F_\e$. When $\e\to 0$ fast enough with respect to $\tau$ then we obtain a minimizing movement for the $\Gamma$-limit of $F_\e$; i.e., in our case for the Mumford-Shah functional. In analogy with the result of Braides {\em et al.}~\cite{cinque} we will prove that the restriction that $\e\to0$ fast enough may be removed, so that we may regard the Mumford-Shah functional as a dynamic approximation for $F_\e$. For further properties of minimizing movements for a single energy we refer to \cite{AGS}.

In this section it is useful to express \eqref{pm2} in the following way: we define $f_{\varepsilon}:\R\to \R^+$ by 
\begin{align*}
f_{\varepsilon}(u)= \frac{1}{\3} J\Bigl(\sqrt{\3}u\Bigr)
\end{align*}
and rewrite $F_{\varepsilon}(u)$ as
\begin{align}
\label{F}
F_{\varepsilon}(u)=\sum_{i=1}^{\N_{\varepsilon}}\varepsilon\,f_{\varepsilon}\Bigl(\frac{u_i-u_{i-1}}{\varepsilon}\Bigr).
\end{align}

\begin{oss}
\label{remark1}
If $u:I_{\varepsilon}\to \R$ then, with a little abuse of notation, we will denote with $u$ also the piecewise-constant extension defined by 
$\displaystyle u(x)= u_{\lfloor x/\varepsilon \rfloor}$.
\end{oss}

\subsection{A compactness results}
The paper \cite{cinque} analyzes the dynamic behaviour for functionals similar to (\ref{F}), up to a scaling factor, but with $\widetilde J(z)=\min\{z^2,1\}$ (Blake and Zisserman potential), which has a convex-concave form similar to the Perona-Malik potential. 
A crucial argument in that paper is an observation by Chambolle \cite{Ch} which
allows to identify each function $u$ with a function $v$ such that $F_\e(u)=M_s(v)$ and $v$ is $\e$-close in $L^1$-norm to $u$. In this way the coerciveness properties for the Mumford-Shah functional imply compactness properties for sequences with equibounded energy. 

The Chambolle argument 
simply identifies indices $i$ such that  $(u_{i}-u_{i-1})/\e$ is not in the `convexity region' for the corresponding $f_\e$ with jump points of $v$. This argument is not possible in our case. 
Indeed, let $\varepsilon_n\to 0$ be a vanishing sequence of indices, and let $u_n:I_{\varepsilon_n} \to \R$ be such that

$\bullet$ $\{u_n\}$ is e sequence of equibounded functions;

$\bullet$ $\sup_n F_n(u_n)<M$,  $M>0$ a constant.

\noindent Define the set 
\begin{align*}
I_n^j=\Bigl\{i\in \Z, \, 0\leq i\leq N_n-1:\frac{\vert (u_n)_{i+1}-(u_n)_i\vert}{\varepsilon_n}>  \frac{1}{\sqrt{\varepsilon_n|\log \varepsilon_n|}}\Bigr\},
\end{align*}
and consider the Chambolle interpolation 
\begin{align}
\label{extension2}
w_n(x):=
\begin{cases}
(u_n)_i \ \ \ \ \ \ \ \ &\mbox{if } i:=\lfloor x/\varepsilon_n\rfloor\in I^j_n \ \ \mbox{ or } \ i=N_n\\
(1-\lambda)(u_n)_i+\lambda (u_n)_{i+1} &\mbox{otherwise} \ \ \ (\lambda:=x/\varepsilon_n-\lfloor x/\varepsilon_n\rfloor)\,.
\end{cases}
\end{align}
Then the set of jump points of $w_n(x)$ may not be bounded as $n\to \infty$, since the only
estimate we may have is
\begin{eqnarray}\nonumber
\label{crescitalogaritmica}
M&\geq& F_n(u_n)\geq \sum_{i\in I^j_n}\varepsilon_n f_n\Bigl(\frac{|(u_n)_{i+1}-(u_n)_{i}|}{\varepsilon_n}\Bigr)\\
&\geq& \sum_{i\in I^j_n}\varepsilon_n f_n\Bigl(\frac{1}{\sqrt{\varepsilon_n|\log \varepsilon_n|}}\Bigr)\geq
\frac{\log(2)}{\vert \log \varepsilon_n \vert}\#\,(I_n^j),
\end{eqnarray}
so that
\begin{align}
\#\,(S(w_n))\leq\#\,(I_n^j)\leq C\vert \log \varepsilon_n \vert.
\end{align}

In order to avoid this obstacle we need to modify the previous sequence. To that end 
it is useful to briefly recall some results due to Morini and Negri \cite{otto}.
\begin{lem} 
\label{lemma1}
Let $p(\varepsilon)>0$ be such that $\lim_{\varepsilon\to 0^+} p(\varepsilon)=0$ and 
\begin{align*}
\lim_{\varepsilon\to 0^+}\Bigl(p(\varepsilon)|\log(\varepsilon)|-\log(|\log \varepsilon|)\Bigr)=+\infty,
\end{align*}
let $c_{\varepsilon}:=\varepsilon^{p(\varepsilon)}$, then it holds that
\begin{align}
&\lim_{\varepsilon\to 0^+} c_{\varepsilon}|\log(\varepsilon)|=0\\
\label{formula5}
&\lim_{\varepsilon\to 0^+} \frac{1}{|\log(\varepsilon)|}J\Bigl(\1c_{\varepsilon}\Bigr)=1.
\end{align}
\end{lem}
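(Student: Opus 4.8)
The plan is to reduce both statements to elementary asymptotics after rewriting $c_\e$ in exponential form. Since $0<\e<1$ we have $\log\e=-|\log\e|$, so $c_\e=\e^{p(\e)}=e^{p(\e)\log\e}=e^{-p(\e)|\log\e|}$. For the first limit I would simply take logarithms:
$\log\bigl(c_\e|\log\e|\bigr)=\log|\log\e|-p(\e)|\log\e|=-\bigl(p(\e)|\log\e|-\log|\log\e|\bigr)$,
which tends to $-\infty$ precisely by the second standing hypothesis on $p$; exponentiating gives $c_\e|\log\e|\to 0$.

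For the second limit, recall $J(z)=\log(1+z^2)$ and set $z_\e:=\1\,c_\e$, so that $z_\e^2=\frac{|\log\e|}{\e}\,c_\e^2=|\log\e|\,\e^{\,2p(\e)-1}$ and hence $\log z_\e^2=\log|\log\e|+(1-2p(\e))|\log\e|$. The only input needed here is $p(\e)\to 0$: it makes $1-2p(\e)\to 1$, whence $\log z_\e^2\to+\infty$, i.e. $z_\e^2\to+\infty$ and also $z_\e^{-2}=\e^{\,1-2p(\e)}/|\log\e|\to 0$. Then I would split $\frac{1}{|\log\e|}\log(1+z_\e^2)=\frac{1}{|\log\e|}\log z_\e^2+\frac{1}{|\log\e|}\log\bigl(1+z_\e^{-2}\bigr)$: the second term is infinitesimal divided by $|\log\e|\to\infty$, hence vanishes, while the first equals $1-2p(\e)+\frac{\log|\log\e|}{|\log\e|}$, which tends to $1$ since both $p(\e)$ and $\log|\log\e|/|\log\e|$ are infinitesimal. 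This yields \eqref{formula5}.

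There is no genuine obstacle in this argument: the entire content is keeping track of signs (remembering $\log\e<0$) and noticing that the two hypotheses on $p$ play distinct roles — the divergence of $p(\e)|\log\e|-\log|\log\e|$ is exactly what forces $c_\e|\log\e|\to 0$, whereas for \eqref{formula5} only $p(\e)\to 0$ is used, together with the elementary fact that once $z_\e^2\to\infty$ the constant $1$ inside the logarithm contributes nothing to the leading order.
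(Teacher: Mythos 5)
Your proof is correct. The paper itself does not prove this lemma (it is merely recalled from Morini and Negri \cite{otto}), and your elementary verification is exactly the natural argument: taking logarithms to turn the hypothesis $p(\varepsilon)|\log\varepsilon|-\log|\log\varepsilon|\to+\infty$ into $c_\varepsilon|\log\varepsilon|\to 0$, and writing $\frac{1}{|\log\varepsilon|}\log(1+z_\varepsilon^2)=\frac{\log|\log\varepsilon|}{|\log\varepsilon|}+1-2p(\varepsilon)+\frac{1}{|\log\varepsilon|}\log(1+z_\varepsilon^{-2})\to 1$, with the correct observation that only $p(\varepsilon)\to 0$ is needed for the second limit while the divergence hypothesis drives the first.
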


We define $b_{\varepsilon}:=(\3)^{1/4}$. 
Denoted $b_n=b_{\e_n}$ and $c_n=c_{\e_n}$, we define  the following sets
\begin{align*}
I_n^1(u_n):&=\Bigl\{i\in\Z, \ 0\leq i\leq N_n-1 : \frac{b_n}{\sqrt{\varepsilon_n |\log \varepsilon_n|}}\leq\frac{\vert (u_n)_{i+1}-(u_n)_i\vert}{\varepsilon_n}\leq \frac{c_n}{\varepsilon_n}\Bigr\}\\
&=\{x_n^1,\dots x_n^{m_n}\}, 
\end{align*}
where $m_n:=\# I_n^1(u_n)$.

The sequence $\{u_n\}$ may be modified into a sequence $\{\tilde{u}_n\}$  such that

1. $I_{n}^1(\tilde{u}_{n})$ is empty;

2. $\Vert\tilde{u}_{n}-u_{n}\Vert_1\to 0$;

3. $F_{n}(\tilde{u}_{n})\leq F_{n}(u_{n})$.

\noindent To that end we define by induction the following sequence
\begin{align*}
&v_n^0\equiv u_n\\
&v_n^{k+1}(t):=
\begin{cases}
v_n^k(t)  &\mbox{if }\  t< x_n^{k+1}+\varepsilon_n\\
v_n^k(t)-[v_n^k( x_n^{k+1}+\varepsilon_n)-v_n^k( x_n^{k+1})] \ \ &\mbox{if }\ t\geq x_n^{k+1}+\varepsilon_n
\end{cases}
\end{align*}
for $k=0,\dots, m_n-1$,
and then we set $\tilde{u}_n:=v_n^{m_n}$. This sequence satisfies all our requests (see \cite{otto}).

\begin{oss}
\label{ossdue}
 It is worth noting that $(\tilde{u}_n)_{i+1}-(\tilde{u}_n)_{i}=(u_n)_{i+1}-(u_n)_{i}$ for indices in $I_n\backslash I_n^1(u_n) $.
\end{oss}
We now consider the following sets
\begin{align*}
&I^2_n:=\Bigl\{i\in\Z, \ 0\leq i\leq N_n-1 :\frac{\vert (\tilde{u}_n)_{i+1}-(\tilde{u}_n)_i\vert}{\varepsilon_n}\leq  \frac{b_n}{\sqrt{\varepsilon_n|\log \varepsilon_n|}}\Bigr\},\\
&I^3_n:=\Bigl\{i\in\Z, \ 0\leq i\leq N_n-1 :\frac{\vert (\tilde{u}_n)_{i+1}-(\tilde{u}_n)_i\vert}{\varepsilon_n}\geq  \frac{c_n}{\varepsilon_n}\Bigr\},
\end{align*}
and the extension  $\tilde{w}_n$  of the function $\tilde{u}_n$ on $[0,1]$ such that $\tilde{w}_n$ is  the affine interpolation of $\tilde{u}_n$ on $I_n^2$ and it is piecewise-constant on $I_n^3$.
\begin{align}
\label{extension}
\tilde{w}_n(x):=
\begin{cases}
(\tilde{u}_{n})_i \ \ \ \ \ \ \ \ &\mbox{if } i:=\lfloor x/\varepsilon_n\rfloor\in I^3_n \ \ \mbox{ or } \ i=N_n\\
(1-\lambda)(\tilde{u}_{n})_i+\lambda (\tilde{u}_{n})_{i+1} &\mbox{otherwise} \ \ \ (\lambda:=x/\varepsilon_n-\lfloor x/\varepsilon_n\rfloor)\,.
\end{cases}
\end{align}
We remark that $I_{\varepsilon_n}=I^2_n\cup I^3_n$, so that $\tilde{w}_n$ is defined for all $x$.

We note that $\tilde{w}_{n}$ still converge to $u_n$ in $L^1$. Moreover it can be proved (see \cite{otto}) that  for a fixed $\delta <<1$ there exists an $\bar{\varepsilon}$ such that for $\varepsilon_n\leq \bar{\varepsilon}$ it holds 
\begin{align}
\label{disuguaglianzaext}
F_n(u_n)\geq (1-\delta)\Bigl(\int_0^1|\tilde{w}_n'|^2\, dx+\mathcal{H}^0(S(\tilde{w}_n))\Bigr),
\end{align}
where $S(\tilde{w}_n)$ is the set of jump points of $\tilde{w}_n$.

Collecting \eqref{extension},\eqref{disuguaglianzaext} and  using the coerciveness properties of the Mumford-Shah energy \cite{uno}, we have the following lemma.
 \begin{lem}
 \label{lemma2}
  Let $\varepsilon_n\to 0$ be a sequence of vanishing indices, $\{u_n\}$ be an equibounded sequence of functions $u_n:I_{\varepsilon_n}\to \R$ such that 
$\sup_n F_n(u_n)\leq M$ for a constant $M>0$. Let $\tilde{w}_n$ be as in \eqref{extension} and suppose that \eqref{disuguaglianzaext} holds. Then, up to a subsequence, there exists a  function  $u\in SBV([0,1])$ such that
 \begin{align*}
 \tilde{w}_n\to u, \ \ \ \ \tilde{w}'_n \rightharpoonup u' \ \ \ \ \mbox{ in } L^2(0,1)\,.
 \end{align*}
 Moreover, $D^j \tilde{w}_n\rightharpoonup D^j u$ weakly-* in the sense of measures.
 \end{lem}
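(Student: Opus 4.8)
The plan is to obtain Lemma \ref{lemma2} as a direct consequence of Ambrosio's compactness and closure theorem in $SBV$ (the coerciveness properties of the Mumford--Shah functional quoted as \cite{uno}), once the two structural a priori bounds that such a theorem needs — a bound on the Dirichlet energy of the absolutely continuous part and a bound on the number of jump points — have been extracted from \eqref{disuguaglianzaext}, and once the sequence $\{\tilde w_n\}$ has been shown to be equibounded.

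First I would fix $\delta\in(0,1)$ and restrict to $n$ so large that $\varepsilon_n\le\bar\varepsilon$, so that \eqref{disuguaglianzaext} is available. Together with the standing assumption $\sup_n F_n(u_n)\le M$ it gives at once
\[
\int_0^1|\tilde w_n'|^2\,dx\le\frac{M}{1-\delta},\qquad \mathcal H^0(S(\tilde w_n))\le\frac{M}{1-\delta}.
\]
Thus each $\tilde w_n$ is a piecewise-$H^1$ function on $(0,1)$ with a uniformly bounded number of discontinuity points and uniformly bounded Dirichlet energy on each of its continuity intervals. This is exactly the role of the passage from $u_n$ to $\tilde u_n$ and of the definition \eqref{extension}: for $u_n$ and the naive Chambolle interpolation \eqref{extension2} the number of jumps was only controlled by a constant times $|\log\varepsilon_n|$, whereas after removing the ``intermediate'' elongations collected in $I_n^1$ only the indices of $I_n^3$ survive as jumps of $\tilde w_n$, and by \eqref{formula5} each such interaction carries energy at least $1-o(1)$, so their number stays bounded independently of $n$.

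Next I would check that $\{\tilde w_n\}$ is bounded in $L^1(0,1)$ — indeed, after truncation, in $L^\infty$. Here the equiboundedness of $\{u_n\}$ is used: since $\|\tilde u_n-u_n\|_1\to 0$ and, on each lattice cell, $\tilde w_n$ is either a convex combination of two consecutive values of $\tilde u_n$ or constant equal to one such value, one has $\|\tilde w_n\|_1\le\|u_n\|_1+\|\tilde u_n-u_n\|_1+o(1)\le C$. No $L^\infty$ bound follows from the energy alone, because $F_n$ weighs a jump of amplitude $s$ only like $|\log\varepsilon_n|^{-1}\log\!\big(1+s^2|\log\varepsilon_n|/\varepsilon_n\big)$ and so does not by itself prevent the jumps of $\tilde w_n$ from growing; it is precisely to keep the dangerous elongations under control while preserving $L^1$-closeness that the modification $u_n\mapsto\tilde u_n$ was made.

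Finally I would run the $SBV$ compactness machinery. For each level $T$ the truncations $\tilde w_n^{(T)}:=(-T)\vee\tilde w_n\wedge T$ lie in $SBV(0,1)$, are equibounded, and still satisfy the two bounds above, so Ambrosio's theorem gives a subsequence converging in $L^1(0,1)$; a diagonal argument over $T\to\infty$ together with the uniform $L^1$ bound then yields a single subsequence and a limit $u\in SBV(0,1)$ with $\tilde w_n\to u$ in $L^1$, hence, by the uniform bounds, strongly in $L^2(0,1)$. The weak $L^2$-limit of the gradients $\tilde w_n'$ is identified with $u'$, and the weak-$*$ limit of the jump measures $D^j\tilde w_n$ with $D^j u$, by the closure part of the theorem. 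This is the statement. The only genuinely delicate point in the whole argument is the $L^1$ bound on $\tilde w_n$ just discussed; everything else is a textbook application of $SBV$ compactness.
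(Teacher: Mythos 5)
Your argument is essentially the paper's own (sketched) proof: extract from \eqref{disuguaglianzaext} the uniform bounds on $\int_0^1|\tilde w_n'|^2dx$ and on $\#(S(\tilde w_n))$, note the equiboundedness inherited from $u_n$ through the Morini--Negri modification, and invoke Ambrosio's $SBV$ compactness and closure theorem (the ``coerciveness properties'' of the Mumford--Shah functional in \cite{uno}) to get $\tilde w_n\to u$, $\tilde w_n'\rightharpoonup u'$ and $D^j\tilde w_n\rightharpoonup D^ju$. The only deviation is your $L^1$-bound-plus-truncation-and-diagonal detour, which is not needed (and is the shakiest point, e.g.\ for upgrading to strong $L^2$ convergence): since ``equibounded'' here means uniformly bounded in $L^\infty$ and the construction of $\tilde u_n$, hence of $\tilde w_n$, preserves this, the compactness theorem applies directly.
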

\begin{oss} The previous lemma implies that the sequence $\{u_n\}$ converges to $u$ in $L^2(0,1)$.
Indeed, $\{u_n\}$ are equibounded and $\Vert \tilde{w}_n-u_n\Vert_1\to 0$ for construction, so that there exists a subsequence in $L^{\infty}(0,1)$ which converges to $u$ a.e. The result follows now from Lebesgue's dominated convergence theorem.
\end{oss}

Now, the $L^2$-convergence of $u_n$ implies the $L^2$-convergence of $w_n$ defined in (\ref{extension2}). Indeed, recalling Remark  \ref{remark1} it holds that for every $x\in [0,1]$
\begin{align}
\label{convergenza}
\vert w_n(x)-u_n(x)\vert\leq \sqrt{\frac{\varepsilon_n}{|\log \varepsilon_n|}}.
\end{align}
Moreover  a simple computation as follows shows that $\{w'_n\}$ is equibounded in $ L^2(0,1)$: 
\begin{eqnarray}
\label{weakbound}
M&>&F_n(u_n)\geq \sum_{i\notin I^j_n(w_n)}\varepsilon_n f_n\Bigl(\frac{ (u_n)_{i+1}-(u_n)_i}{\varepsilon_n}\Bigr)\notag\\ &=&\sum_{i\notin I^j_n(w_n)}\frac{1}{\vert\log\varepsilon_n\vert}J\left(\sqrt{\frac{|\log \varepsilon_n|}{\varepsilon_n}} \left((u_n)_{i+1}-(u_n)_i\right)\right)
\notag\\  &\geq&\sum_{i\notin I^j_n(w_n)}\varepsilon_n\Bigl(\frac{ (u_{n})_{i+1}-(u_{n})_i}{\varepsilon_n}\Bigr)^2\geq \int_0^1 |w_{n}'|^2(x)\,dx.
\end{eqnarray}
Hence, there exists a subsequence weakly converging in $L^2(0,1)$. By an integration by parts argument, up to subsequence, we have
\begin{align}
\label{convergenzadebole}
w'_{n}\rightharpoonup u' \ \ \mbox{ in } L^2(0,1).
\end{align}

 The behaviour of points which are above the convexity threshold is of particular interest and it is described in the following lemma.
\begin{lem}
\label{lemma3} Let $\{u_n\}$ be as in Lemma \ref{lemma2} and $w_n$ as in \eqref{extension2}, then, up to subsequence, for every $\bar{x}\in S(u)$ there exists a sequence $\{x^n\}$ converging to $\bar{x}$ such that
\begin{align}
 x^n\in S(w_n) \ \ \ \mbox{and}\ \ \ \lim_{n\to +\infty}|w_n^+(x^n)-w_n^-(x^n)|>\gamma>0.
\end{align}
\end{lem}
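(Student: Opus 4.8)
\emph{Sketch of proof.} The guiding idea is that a jump of $u$ at $\bar x$ forces $w_n$ to change by an amount close to $|u^+(\bar x)-u^-(\bar x)|$ across every small interval centred at $\bar x$; since on such an interval the elastic part of the energy is small, this net change can only be produced by the jumps of $w_n$, and the logarithmic form of $J$ then shows that only boundedly many of these jumps can be macroscopic, so one of them must carry a fixed fraction of $|u^+(\bar x)-u^-(\bar x)|$. All convergences below are understood along the subsequence produced in Lemma \ref{lemma2}.

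First I would record what is already available. The set $S(u)$ is finite, since $\mathcal{H}^0(S(\tilde w_n))\le M/(1-\delta)$ by \eqref{disuguaglianzaext} and $\mathcal{H}^0(S(\cdot))$ is lower semicontinuous along the convergence of Lemma \ref{lemma2}; moreover $w_n\to u$ in $L^1(0,1)$ by \eqref{convergenza} together with the $L^2$-convergence of $u_n$; $\int_0^1|w_n'|^2\le CM$ by \eqref{weakbound}; and, from \eqref{extension2}, $S(w_n)=\{(i+1)\varepsilon_n:i\in I_n^j\}$ with jump exactly $(u_n)_{i+1}-(u_n)_i$ at $(i+1)\varepsilon_n$, each of modulus $>\sqrt{\varepsilon_n/|\log\varepsilon_n|}$, while \eqref{crescitalogaritmica} gives $\#I_n^j\le (M/\log 2)\,|\log\varepsilon_n|$.

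Now I would fix $\bar x\in S(u)$, put $\sigma:=u^+(\bar x)-u^-(\bar x)\ne 0$, and choose $\rho>0$ so small that $\bar x$ is the only point of $S(u)$ in $(\bar x-\rho,\bar x+\rho)$; then $u$ is $H^1$ on each of the two half-intervals, so its averages on $(\bar x-\rho,\bar x)$ and on $(\bar x,\bar x+\rho)$ differ from $u^-(\bar x)$ and $u^+(\bar x)$ by at most $\eta_\rho:=\|u'\|_{L^2(\bar x-\rho,\bar x+\rho)}\sqrt\rho$, and $\eta_\rho\to 0$ as $\rho\to 0$. For $y\in(\bar x-\rho,\bar x)$ and $z\in(\bar x,\bar x+\rho)$ I would use, for the piecewise-affine function $w_n$,
\[
w_n(z)-w_n(y)=\int_y^z w_n'(t)\,dt+\sum_{x_k\in S(w_n)\cap(y,z)}\big(w_n^+(x_k)-w_n^-(x_k)\big),
\]
and average this over $y$ and $z$: the left-hand side becomes $\tfrac1\rho\int_{\bar x}^{\bar x+\rho}w_n-\tfrac1\rho\int_{\bar x-\rho}^{\bar x}w_n$, the absolutely continuous term is bounded by $\sqrt{2CM\rho}$ by Cauchy--Schwarz, and a direct estimate of the relevant two-dimensional measures bounds the averaged jump term by $\sum_{x_k\in S(w_n)\cap(\bar x-\rho,\bar x+\rho)}|w_n^+(x_k)-w_n^-(x_k)|$. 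Letting $n\to\infty$ with $\rho$ fixed (using $w_n\to u$ in $L^1$), the left-hand side tends to a number within $2\eta_\rho$ of $\sigma$, so
\[
\liminf_{n\to\infty}\ \sum_{x_k\in S(w_n)\cap(\bar x-\rho,\bar x+\rho)}|w_n^+(x_k)-w_n^-(x_k)|\ \ge\ |\sigma|-2\eta_\rho-\sqrt{2CM\rho}\ =:\ \kappa_\rho,
\]
with $\kappa_\rho\to|\sigma|>0$ as $\rho\to 0$.

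The decisive step is a dichotomy on the sizes of these jumps. I would call a jump of $w_n$ in $(\bar x-\rho,\bar x+\rho)$ \emph{small} if it has modulus $\le\varepsilon_n^{1/4}$ and \emph{large} otherwise. Since there are at most $(M/\log 2)|\log\varepsilon_n|$ jumps in all, the small ones contribute at most $(M/\log 2)|\log\varepsilon_n|\,\varepsilon_n^{1/4}\to 0$ to the sum above. On the other hand, if $(u_n)_{i+1}-(u_n)_i$ has modulus $>\varepsilon_n^{1/4}$ then the corresponding scaled elongation exceeds $\varepsilon_n^{-1/4}|\log\varepsilon_n|^{1/2}$, so by $J(z)=\log(1+z^2)$ its energy share is at least $\frac1{|\log\varepsilon_n|}\log\!\big(1+\varepsilon_n^{-1/2}|\log\varepsilon_n|\big)\ge\frac12$ for $\varepsilon_n$ small; since the total energy is at most $M$, there are at most $2M$ large jumps. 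Hence for all $n$ large (depending on $\rho$) one has $\sum_{\mathrm{large}}|w_n^+(x_k)-w_n^-(x_k)|\ge\kappa_\rho/2$, so some large jump occurs at a point $x^n_\rho\in(\bar x-\rho,\bar x+\rho)\cap S(w_n)$ with $|w_n^+(x^n_\rho)-w_n^-(x^n_\rho)|\ge\kappa_\rho/(4M)$. Applying this with $\rho=\rho_m\downarrow 0$, using $\kappa_{\rho_m}\to|\sigma|$, and passing to a diagonal sequence (and, if an honest limit is wanted, to a further subsequence) yields $x^n\to\bar x$ with $x^n\in S(w_n)$ and $\liminf_n|w_n^+(x^n)-w_n^-(x^n)|\ge|\sigma|/(8M)>0$; since $S(u)$ is finite this can be arranged for all $\bar x\in S(u)$ at once, with $\gamma=\tfrac1{8M}\min_{\bar x\in S(u)}|u^+(\bar x)-u^-(\bar x)|$.

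I expect this dichotomy to be the main obstacle. Unlike the truncated-quadratic (Blake--Zisserman) case of \cite{cinque}, where the Chambolle interpolation has a bounded number of jumps, here $w_n$ may genuinely have $\sim|\log\varepsilon_n|\to\infty$ jump points — this is the content of \eqref{crescitalogaritmica} — so one cannot argue that $S(w_n)$ stays bounded. It is precisely the slow, logarithmic growth of the Perona--Malik potential that makes macroscopic jumps cost a definite amount of energy (hence bounded in number) while keeping the contribution of the many microscopic ones collectively negligible; isolating the right threshold for the split (here $\varepsilon_n^{1/4}$, in the spirit of the scale $b_n=(\varepsilon_n|\log\varepsilon_n|)^{1/4}$ used before Lemma \ref{lemma2}) is the only non-routine point, the averaging and the diagonal extraction being standard.
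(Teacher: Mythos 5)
Your argument is correct, but it takes a genuinely different route from the paper's. The paper does not work with $w_n$ directly: it notes that $S(\tilde{w}_n)\subset S(w_n)$ and that $\tilde{w}_n$ satisfies the Mumford--Shah-type bound \eqref{disuguaglianzaext} (so it has boundedly many jumps and Lemma \ref{lemma2} applies to it), and then simply invokes the analogue of this statement, Lemma 2.4 of \cite{cinque}, for $\tilde{w}_n$; the only Perona--Malik-specific ingredient there is the observation that the remaining jump points, those in $S(w_n)\setminus S(\tilde{w}_n)$, carry total jump mass at most $c_n\,\#(I_n^j)\le K c_n|\log\varepsilon_n|\to 0$ by \eqref{crescitalogaritmica} and Lemma \ref{lemma1}. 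You bypass both the modified sequence $\tilde{u}_n,\tilde{w}_n$ and the appeal to \cite{cinque}: the averaged fundamental-theorem identity localizes a definite amount of jump mass of $w_n$ near $\bar{x}$, and your small/large dichotomy at the threshold $\varepsilon_n^{1/4}$ plays the role of the scales $b_n,c_n$ in the paper -- the energy bound caps the number of macroscopic jumps by $2M$ (each costs at least $\tfrac12$ of scaled energy), while \eqref{crescitalogaritmica} makes the collectively many microscopic jumps negligible. What each approach buys: the paper's proof is short because it reuses machinery already set up (the Morini--Negri modification and the known lemma for the Blake--Zisserman case), whereas yours is self-contained, works directly on the Chambolle interpolation despite its possibly $\sim|\log\varepsilon_n|$ jump points, and yields an explicit constant $\gamma$ in terms of $M$ and $\min_{S(u)}|u^+-u^-|$. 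Two minor points: the ``$\lim$'' in the statement indeed requires the further subsequence extraction you mention (as it does in the paper), and since your final bound is the non-strict inequality $\ge|\sigma|/(8M)$ you should choose $\gamma$ strictly smaller, e.g.\ $\gamma=\min_{S(u)}|u^+-u^-|/(16M)$, to match the strict inequality asserted in the lemma.
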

\begin{proof}[Proof.] We observe that $S(\tilde{w}_n)\subset S(w_n)$ and that  Lemma \ref{lemma2} holds for $\tilde{w}_n$, so that we can apply the same proof as in Lemma $2.4$ \cite{cinque}. Instead, for points in $S(w_n)\backslash S(\tilde{w}_n)$ it holds that
\begin{align*}
&\lim_{n\to +\infty}\sum_{x\in S(w_n)\backslash S(\tilde{w}_{n}) }|w_{n}^+(x^{n})-w_{n}^-(x^{n})|\leq \lim_{n\to+ \infty} c_n \#\,(S(w_n)\backslash S(\tilde{w}_n))\\
& \leq \lim_{n\to +\infty}  c_n \#\,(I_{n}^j)\leq  \lim_{n\to +\infty} K c_n |\log \varepsilon_n|=0,
\end{align*}
where in the last inequality we use \eqref{crescitalogaritmica}.
\end{proof}

\subsection{Minimizing Movements}
With fixed $\e$ and $\tau=\tau_\e$, from an initial state $u_0^{\varepsilon}: I_{\varepsilon}\to \R$, we define the sequence $u^k:=u^k_{\varepsilon,\tau}$ such that $u^k$ is a minimizer of
\begin{align}
\label{mm}
v\to F_{\varepsilon}(v)+\frac{1}{2\tau}\sum_{i=0}^{N _{\varepsilon}}\varepsilon \vert v_i-u_i^{k-1}\vert^2 \ \ \ \ \forall\, v: I_{\varepsilon} \to \R.
\end{align}
We define the piecewise-constant extension $u_{\varepsilon,\tau}:[0,1]\times[0,+\infty) \to \R$ as
\begin{align}
\label{ext1}
u_{\varepsilon,\tau}(x,t)=(u_{\varepsilon,\tau}^k )_i\ \ \ \mbox{ with } \, k=\lfloor t/\tau\rfloor \ \ \mbox{ and } \ \ i=\lfloor x/\varepsilon\rfloor,
\end{align}
and take the limit (upon extraction of a subsequence) for  both  parameters $\varepsilon\to 0$ and $\tau \to 0$. A limit $u$ is called a {\em minimizing movement} along $F_\e$ at scale $\tau=\tau_\e$.
Observe that in general the limit will depend on the choice of $\varepsilon$ and $\tau$.\\

We now state some properties of minimizing movements \cite{due}.
\begin{prop}
\label{properties}
Let $F_{\varepsilon}$ be as in  \eqref{F} and $u^k$ be defined as above. Then for every $k\in \N$ it holds that
\begin{align*}
&1)\ \ \ \ F_{\varepsilon}(u^k)\leq F_{\varepsilon}(u^{k-1});\\
&2)\ \ \ \ \sum_{i=0}^{N_{\varepsilon}}\varepsilon \vert u^k_i-u^{k-1}_i\vert^2\leq 2\tau [F_{\varepsilon}(u^{k-1})-F_{\varepsilon}(u^k)];\\
&3) \ \ \ \ \Vert u^k \Vert_{\infty}\leq \Vert u^{k-1}\Vert_{\infty}\leq \Vert u^0_{\varepsilon}\Vert_{\infty}.
\end{align*}
\end{prop}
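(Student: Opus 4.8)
The three statements are the standard properties of the minimizing-movement (or implicit Euler) scheme, so the plan is simply to exploit the minimality of $u^k$ in \eqref{mm} against well-chosen competitors. For item 1), I would test the functional $v\mapsto F_\varepsilon(v)+\frac{1}{2\tau}\sum_i\varepsilon|v_i-u_i^{k-1}|^2$ at the competitor $v=u^{k-1}$: since $u^k$ is a minimizer, one gets
\begin{align*}
F_\varepsilon(u^k)+\frac{1}{2\tau}\sum_{i=0}^{N_\varepsilon}\varepsilon|u_i^k-u_i^{k-1}|^2\le F_\varepsilon(u^{k-1})+0=F_\varepsilon(u^{k-1}),
\end{align*}
and since the dissipation term is nonnegative, $F_\varepsilon(u^k)\le F_\varepsilon(u^{k-1})$. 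For item 2), I would rearrange the very same inequality: the dissipation term is bounded by $2\tau\,[F_\varepsilon(u^{k-1})-F_\varepsilon(u^k)]$, which is exactly the claim. Thus 1) and 2) come from a single application of minimality and require essentially no computation.

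For item 3) the plan is a truncation argument. Fix $L:=\|u^{k-1}\|_\infty$ and consider the competitor $v$ obtained from $u^k$ by truncating at level $L$, i.e.\ $v_i:=(u_i^k\vee(-L))\wedge L$. One checks that this truncation does not increase either term of the functional: for the fidelity term, $|v_i-u_i^{k-1}|\le|u_i^k-u_i^{k-1}|$ pointwise because $u_i^{k-1}\in[-L,L]$ and truncation onto $[-L,L]$ is a $1$-Lipschitz projection that moves points toward $u_i^{k-1}$; for $F_\varepsilon$, note that $F_\varepsilon(v)=\sum_i\varepsilon f_\varepsilon((v_i-v_{i-1})/\varepsilon)$ with $f_\varepsilon$ even and nondecreasing on $[0,+\infty)$ (inherited from the monotonicity properties of $J$ listed after \eqref{pm2}), and truncation does not increase any nearest-neighbour difference $|v_i-v_{i-1}|\le|u_i^k-u_{i-1}^k|$. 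Hence the functional at $v$ is $\le$ its value at $u^k$; by uniqueness/minimality (or simply because $u^k$ is a minimizer and the inequality forces equality, so we may take $u^k=v$ which is already truncated) we conclude $\|u^k\|_\infty\le L=\|u^{k-1}\|_\infty$, and iterating down to $k=0$ gives $\|u^k\|_\infty\le\|u^0_\varepsilon\|_\infty$.

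The only point that needs a little care — the ``main obstacle'', such as it is — is the monotonicity claim $|v_i-v_{i-1}|\le|u_i^k-u_{i-1}^k|$ for the truncation: this is the elementary fact that the map $t\mapsto(t\vee(-L))\wedge L$ is $1$-Lipschitz, applied coordinatewise, combined with the fact that $f_\varepsilon$ is nondecreasing in $|\cdot|$. Once that is observed, together with the contraction property of truncation onto an interval containing $u^{k-1}_i$, item 3) follows. Everything else is bookkeeping with the defining inequality of the scheme.
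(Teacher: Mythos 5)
Your proposal is correct and is essentially the standard argument the paper has in mind: the paper states Proposition \ref{properties} without proof, referring to \cite{due}, and there items 1)--2) are obtained exactly as you do, by testing the minimality of $u^k$ in \eqref{mm} against the competitor $v=u^{k-1}$ and rearranging, while 3) is the usual truncation comparison. One small point to tighten in your item 3): you should not appeal to ``uniqueness'' of the minimizer (the functional in \eqref{mm} is non-convex, so minimizers need not be unique), nor replace $u^k$ by its truncation $v$ (the claim concerns the chosen $u^k$). The clean fix is already implicit in your estimates: if $|u^k_i|>L:=\Vert u^{k-1}\Vert_\infty$ for some $i$, then the fidelity term strictly decreases at that index, since $u_i^{k-1}\in[-L,L]$ gives $|v_i-u_i^{k-1}|<|u_i^k-u_i^{k-1}|$, while $F_\varepsilon(v)\le F_\varepsilon(u^k)$ because truncation is $1$-Lipschitz and $f_\varepsilon$ is even and nondecreasing in $|\cdot|$; hence the total functional strictly decreases, contradicting minimality of $u^k$. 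This shows every minimizer satisfies $\Vert u^k\Vert_\infty\le\Vert u^{k-1}\Vert_\infty$, and iteration gives the bound by $\Vert u^0_\varepsilon\Vert_\infty$; the rest of your argument stands as written.
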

From \eqref{F} and \eqref{mm} we obtain the following optimality conditions.
\begin{prop}
\label{optimality}
 Let $\{u^k\}_k$ be a sequence of minimizer of \eqref{mm}. Then we have
\begin{align*}
-&f_{\varepsilon}'\Bigl(\frac{u^k_1-u^k_0}{\varepsilon}\Bigr)+\frac{\varepsilon}{\tau}(u^k_0-u^{k-1}_0)=0,\\[1.5ex]
 &f_{\varepsilon}'\Bigl(\frac{u^{k}_i-u^k_{i-1}}{\varepsilon}\Bigr)-f_{\varepsilon}'\Bigl(\frac{u^{k}_{i+1}-u^k_i}{\varepsilon}\Bigr)+\frac{\varepsilon}{\tau}(u^{k}_i-u^{k-1}_i)=0,\\[1.5ex]
 &f_{\varepsilon}'\Bigl(\frac{u^k_N-u^k_{N-1}}{\varepsilon}\Bigr)+\frac{\varepsilon}{\tau}(u^k_N-u^{k-1}_N)=0.
\end{align*}
\end{prop}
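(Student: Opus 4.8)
The plan is to treat (\ref{mm}) for what it is: a finite-dimensional unconstrained minimization of a smooth function on $\R^{N_\varepsilon+1}$, so that a minimizer is automatically a critical point, and then to compute the partial derivatives one index at a time.

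First I would record that $f_\varepsilon$ is of class $C^\infty$, since $f_\varepsilon(u)=\frac{1}{\varepsilon|\log\varepsilon|}J\bigl(\sqrt{\varepsilon|\log\varepsilon|}\,u\bigr)$ with $J(z)=\log(1+z^2)$ smooth. Hence
$$
\Phi_k(v):=F_\varepsilon(v)+\frac{1}{2\tau}\sum_{i=0}^{N_\varepsilon}\varepsilon\,|v_i-u_i^{k-1}|^2,\qquad v=(v_0,\dots,v_{N_\varepsilon})\in\R^{N_\varepsilon+1},
$$
is a smooth function on the whole of $\R^{N_\varepsilon+1}$; it is moreover coercive (the quadratic penalization dominates, since $F_\varepsilon\ge 0$), so the minimizer $u^k$ exists. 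Because the admissible set is all of $\R^{N_\varepsilon+1}$, which is open, $u^k$ must satisfy $\partial\Phi_k/\partial v_j(u^k)=0$ for every $j=0,\dots,N_\varepsilon$, and it only remains to identify these equations.

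Next I would carry out the differentiation using $F_\varepsilon(v)=\sum_{i=1}^{N_\varepsilon}\varepsilon f_\varepsilon\bigl((v_i-v_{i-1})/\varepsilon\bigr)$. A prescribed unknown $v_j$ enters only the summands with $i=j$ (with a $+$ sign) and $i=j+1$ (with a $-$ sign), and the chain rule produces a factor $\pm1/\varepsilon$ that cancels the prefactor $\varepsilon$. Thus for an interior index $1\le j\le N_\varepsilon-1$ one gets
$$
\frac{\partial\Phi_k}{\partial v_j}(v)=f_\varepsilon'\!\Big(\frac{v_j-v_{j-1}}{\varepsilon}\Big)-f_\varepsilon'\!\Big(\frac{v_{j+1}-v_j}{\varepsilon}\Big)+\frac{\varepsilon}{\tau}(v_j-u_j^{k-1}),
$$
whereas for $j=0$ only the $i=1$ term contributes, leaving $-f_\varepsilon'\bigl((v_1-v_0)/\varepsilon\bigr)+\frac{\varepsilon}{\tau}(v_0-u_0^{k-1})$, and for $j=N_\varepsilon$ only the $i=N_\varepsilon$ term contributes, leaving $f_\varepsilon'\bigl((v_{N_\varepsilon}-v_{N_\varepsilon-1})/\varepsilon\bigr)+\frac{\varepsilon}{\tau}(v_{N_\varepsilon}-u_{N_\varepsilon}^{k-1})$. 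Setting each of these to zero at $v=u^k$ gives exactly the three displayed identities.

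There is no real obstacle here: the computation is elementary. The only points demanding a little care are the bookkeeping of which terms of the sum $F_\varepsilon$ contain a given $v_j$ — in particular the asymmetric role of the endpoints $j=0$ and $j=N_\varepsilon$, each of which is touched by a single $f_\varepsilon'$-term — and the cancellation of the normalization factors $\varepsilon$ coming from the discrete gradients. Equivalently, one may argue variationally by evaluating $s\mapsto\Phi_k(u^k+s\,e_j)$ along coordinate directions $e_j$ and imposing that its derivative at $s=0$ vanish; this makes the signs of the two $f_\varepsilon'$ contributions transparent.
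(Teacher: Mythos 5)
Your computation is correct and is exactly the argument the paper has in mind: the paper states these optimality conditions without proof as the first-order conditions obtained by differentiating the smooth, coercive finite-dimensional functional in \eqref{mm} written via \eqref{F}, which is precisely your coordinate-wise differentiation with the endpoint indices $j=0$ and $j=N_\varepsilon$ contributing a single $f_\varepsilon'$-term each. No gap to report.
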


On the initial states $u^0_{\varepsilon}$ we make the following assumptions.
\begin{align}
\label{ip1}
& \sup\{\vert(u_{\varepsilon}^0)_i\vert \ : 0\leq i \leq N, \ \varepsilon>0\}<\infty.\\
\label{ip2}
& F(u_{\varepsilon}^0)\leq M \ \mbox{ for some } M>0 \ \mbox{ and for every } \varepsilon>0.
\end{align}
Under these hypothesis it is possible to prove the following result.
\begin{thm}
\label{thm1}
Let $\{\varepsilon_n\},\{\tau_n\}\to 0$.
Let $v_{n}=u_{\varepsilon_n,\tau_n}$ be defined as in \eqref{ext1} and consider $\tilde{v}_{n}$ its extension by linear interpolation as in \eqref{extension2}. Then there exist a subsequence of $\{v_n\}$ and a function $u\in C^{1/2}([0,+\infty]; L^2(0,1))$ such that

{\em1)} $v_{n} \to u$ , $\tilde{v}_{n}\to u$ in $L^{\infty}([0,T];L^2(0,1))$ and a.e.~in $(0,1)\times (0,T)$
for every $T\geq 0$;

{\em2)} for all $t\geq 0$ the function $u(\cdot,t)$ is piecewise-$H^1(0,1)$ and $(\tilde{v}_{n})_x(\cdot,t)\rightharpoonup u_x(\cdot,t)$ in $L^2(0,1)$;

{\em3)} for every $\bar{x}\in S(u(\cdot,t))$ there exist a subsequence $\{v_{n_h}\}$ (which can also depend on $t$) and a sequence $(x^h)_h$ converging to $\bar{x}$ such that $x^h\in S(\tilde{v}_{n_h})$.
\end{thm}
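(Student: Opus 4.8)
The plan is to obtain compactness in time by combining the a priori estimates of Proposition~\ref{properties} with the spatial compactness of Lemma~\ref{lemma2}, in the spirit of the Ascoli–Arzelà argument for minimizing movements. First I would fix $T>0$ and work on $[0,T]$. From estimate 1) of Proposition~\ref{properties} the energies $F_{\varepsilon_n}(u^k)$ are bounded by $F_{\varepsilon_n}(u^0_{\varepsilon_n})\le M$ uniformly in $k$ and $n$ (using \eqref{ip2}), and from 3) the sequences $u^k$ are equibounded in $L^\infty$ (using \eqref{ip1}). Hence for each fixed $t$ the discrete functions $v_n(\cdot,t)$ satisfy the hypotheses of Lemma~\ref{lemma2}; applying it to the associated Chambolle-type interpolant $\tilde v_n(\cdot,t)$ gives, for each fixed rational $t$, a subsequence converging in $L^2(0,1)$ to some $u(\cdot,t)\in SBV(0,1)$ with $(\tilde v_n)_x(\cdot,t)\rightharpoonup u_x(\cdot,t)$ in $L^2$. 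A diagonal extraction over a countable dense set of times produces a single subsequence working for all rational $t$ simultaneously; the inequality \eqref{convergenza} (or its analogue for $\tilde v_n$, controlled by $\sqrt{\varepsilon_n/|\log\varepsilon_n|}\to0$) ensures that $v_n$ and $\tilde v_n$ have the same $L^2$-limit at each such $t$.

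The second step is the time regularity, which upgrades the convergence from a dense set to all $t\in[0,T]$ and yields $u\in C^{1/2}([0,+\infty);L^2(0,1))$. Here I would use estimate 2) of Proposition~\ref{properties}: summing over $k$ from $k_1=\lfloor s/\tau_n\rfloor$ to $k_2=\lfloor t/\tau_n\rfloor$ and applying Cauchy–Schwarz,
\begin{align*}
\sum_{i=0}^{N_{\varepsilon_n}}\varepsilon_n|u^{k_2}_i-u^{k_1}_i|^2\le \Bigl(\sum_{k=k_1+1}^{k_2}\sqrt{\sum_i\varepsilon_n|u^k_i-u^{k-1}_i|^2}\Bigr)^2\le 2(k_2-k_1)\tau_n\,[F_{\varepsilon_n}(u^{k_1})-F_{\varepsilon_n}(u^{k_2})]\le 2M\,(t-s+\tau_n),
\end{align*}
so that $\|v_n(\cdot,t)-v_n(\cdot,s)\|_{L^2(0,1)}\le C(t-s+\tau_n)^{1/2}$. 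This uniform Hölder estimate passes to the limit on the dense set of times, extends $u$ to a genuine $C^{1/2}$ map into $L^2(0,1)$, and—together with the equiboundedness in $L^\infty$ and a standard Ascoli-type argument—promotes the convergence $v_n,\tilde v_n\to u$ to hold in $L^\infty([0,T];L^2(0,1))$ and a.e.\ in $(0,1)\times(0,T)$, giving conclusions 1) and 2).

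The third conclusion, persistence of jump points, is exactly the content of Lemma~\ref{lemma3} applied slicewise in $t$: for each fixed $t$ and each $\bar x\in S(u(\cdot,t))$ that lemma (whose hypotheses are met because $v_n(\cdot,t)$ satisfies those of Lemma~\ref{lemma2}) furnishes a further subsequence $\{v_{n_h}(\cdot,t)\}$ and points $x^h\to\bar x$ with $x^h\in S(\tilde v_{n_h}(\cdot,t))$ and a uniformly positive jump; note the subsequence is allowed to depend on $t$, as stated. I expect the main obstacle to be bookkeeping rather than a deep difficulty: one must be careful that the single diagonal subsequence extracted in Step~1 is genuinely $t$-independent and compatible with the time-continuity estimate, and that the $L^1$-closeness of the various interpolants $u_n$, $w_n$, $\tilde w_n$ (quantified by $\sqrt{\varepsilon_n/|\log\varepsilon_n|}$ and, for the jump-correction step, by $c_n|\log\varepsilon_n|\to0$ via Lemma~\ref{lemma1}) is invoked uniformly in $t$ so that all of them share the limit $u$. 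None of these points is serious, but they are where the argument must be written with care.
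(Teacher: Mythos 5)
Your proposal is correct and follows essentially the same route as the paper: fixed-time compactness via Lemma \ref{lemma2} (under the uniform bounds from Proposition \ref{properties} and \eqref{ip1}--\eqref{ip2}), the discrete H\"older-in-time estimate $\Vert v_n(\cdot,t)-v_n(\cdot,s)\Vert_2\le C\sqrt{t-s+\tau_n}$, an Ascoli-type/partition argument to upgrade to $L^\infty([0,T];L^2(0,1))$ convergence, and Lemma \ref{lemma3} for part (3). The only difference is that you spell out the Cauchy--Schwarz derivation of the H\"older estimate and the diagonal extraction over a dense set of times, which the paper simply cites as standard for minimizing movements.
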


\begin{proof}[Proof.]
We will only give a brief sketch of the proof since it follows strictly  the one in \cite{cinque}. 

For fixed $t\geq 0$ the equiboundedness of initial data guarantees that also $F_{\varepsilon}(v_n(\cdot,t))$ is bounded, so that we are in the hypotheses of the previous section and we can apply the same construction to the sequence $\{v_n(\cdot,t)\}_n$. Having in mind \eqref{convergenza} and \eqref{convergenzadebole}, this shows that up to subsequences, $\tilde{v}_n(\cdot,t)$ is converging in $L^2(0,1)$ to a function $u(\cdot,t)$ piecewise-$H^1(0,1)$ and also $(\tilde{v}_n)_x(\cdot,t)$ is weakly converging in $L^2(0,1)$ to $u_x(\cdot,t)$.
Now, a well-known result for minimizing movements (for example \cite{due}, \cite{cinque}) proves that
\begin{align}
\Vert v_n(\cdot,t)-v_n(\cdot,s)\Vert_2\leq C\sqrt{t-s-\tau_n}
\end{align}
that in the limit becomes
\begin{align}
\Vert u(\cdot,t)-u(\cdot,s)\Vert_2\leq C\sqrt{t-s}
\end{align}
with $C$ independent form both $t$ and $s$. So that the limit function $u$ belongs to $C^{1/2}([0,+\infty]; L^2(0,1))$.

We prove the convergence in $L^{\infty}([0,T];L^2(0,1))$: for $T>0$ fixed, consider $M\in \N$ and $t_j=jT/M$ for $j=0,\dots,M$. Then for every $t\in [0,T]$ there exists a $j=0,\dots, M$ such that $t_{j-1}<t<t_j$, so we have that
\begin{align}
\Vert v_n(\cdot,t)-u(\cdot,t)\Vert_2&\leq \Vert v_n(\cdot,t)-v_n(\cdot,t_{j-1})\Vert_2+\Vert v_n(\cdot,t_{j-1})-u(\cdot,t_{j-1})\Vert_2\notag\\
&+\Vert u(\cdot,t_{j-1})-u(\cdot,t)\Vert_2\notag\\
&\leq 2C\sqrt{t-t_j+\tau_n}+\Vert v_n(\cdot,t_{j-1})-u(\cdot,t_{j-1})\Vert_2.
\end{align}
Since $v_n(\cdot,t)$ is a converging sequence to $u(\cdot,t)$, for $\overline{n}>>1$ it is possible to find an $\eta<<1$ such that
$\Vert v_n(\cdot,t_{j-1})-u(\cdot,t_{j-1})\Vert_2\leq \eta$ for all $n\geq \overline{n}$.
Finally, we have that
\begin{align*}
\sup_{t\in[0,T]}\Vert v_n(\cdot,t)-u(\cdot,t)\Vert_2\leq  2C\sqrt{(T/M)+\tau_n}+ \eta
\end{align*}
for all $n\geq \overline{n}$, which means
\begin{align*}
\limsup_{n\to +\infty }\sup_{t\in[0,T]}\Vert v_n(\cdot,t)-u(\cdot,t)\Vert_2\leq  2C\sqrt{(T/M)+\tau_n}+ \eta
\end{align*}
and the claims now follows from the arbitrariness of $M$ and $\eta$.

We conclude observing that (3) follows from Lemma \ref{lemma3}.
\end{proof}

\subsection{Computation of the limit equation}
Consider now two sequences of indices $\{\varepsilon_n\}\to 0$, $\{\tau_n\}\to 0$ (to simplify the notation from now on we will write $\varepsilon$ instead of $\varepsilon_n$ and similarly $\tau$ instead of $\tau_n$). We define the function
\begin{align}
\label{fin}
\phi_n(x,t):= f_{\varepsilon}'\Bigl(\frac{(u_{\varepsilon,\tau}^k)_{i+1}-(u_{\varepsilon,\tau}^k)_i}{\varepsilon}\Bigr) \ \ \mbox{if} \ i=\lfloor x/\varepsilon \rfloor \ \ \mbox{and}\  k=\lfloor t/\tau \rfloor.
\end{align}
\begin{prop}
\label{3.3}
If $\phi_n$ is defined in \eqref{fin}, then for every $t\geq 0 $  we have
\begin{align*}
\phi_n(\cdot,t)\rightharpoonup 2u_x(\cdot,t) \ \ \ \mbox{in }\ \  L^2(0,1).
\end{align*}
Moreover, for every $T >0$ the sequence $\{\phi_n(\cdot,t)\}$ is uniformly bounded in $L^2(0,1)$ with respect to $t\in[0,T]$ and $u_x\in L^2((0,1)\times (0,T))$.
\end{prop}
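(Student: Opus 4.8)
The plan is to derive the weak convergence $\phi_n(\cdot,t)\rightharpoonup 2u_x(\cdot,t)$ from three ingredients: a uniform $L^2$-bound on $\phi_n(\cdot,t)$ (so that weak limits exist along subsequences), the identification of the weak limit on the ``good'' indices where $f_\varepsilon'$ is essentially the derivative of the Dirichlet integrand, and a proof that the ``bad'' indices (those above the convexity threshold) contribute nothing in the limit. Since $f_\varepsilon(u)=\frac{1}{\varepsilon|\log\varepsilon|}J(\sqrt{\varepsilon|\log\varepsilon|}\,u)$, one computes $f_\varepsilon'(u)=\frac{1}{\sqrt{\varepsilon|\log\varepsilon|}}J'(\sqrt{\varepsilon|\log\varepsilon|}\,u)$, and since $J'(z)=\frac{2z}{1+z^2}$ we get $f_\varepsilon'(u)=\frac{2u}{1+\varepsilon|\log\varepsilon|\,u^2}$. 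So on indices where $\varepsilon|\log\varepsilon|\,((u_i-u_{i-1})/\varepsilon)^2$ is small — which is exactly the regime $i\notin I_n^j$ in the notation of \eqref{extension2} — we have $\phi_n\approx 2(u_{i+1}-u_i)/\varepsilon$, i.e.\ $\phi_n(\cdot,t)$ is asymptotically $2(\tilde v_n)_x(\cdot,t)$ on the complement of the jump set, and Theorem \ref{thm1}(2) gives $(\tilde v_n)_x(\cdot,t)\rightharpoonup u_x(\cdot,t)$ in $L^2$.

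First I would establish the uniform bound. For $i\notin I_n^j(v_n(\cdot,t))$ one has, as in \eqref{weakbound}, $|f_\varepsilon'((u_{i+1}-u_i)/\varepsilon)|\le 2|(u_{i+1}-u_i)/\varepsilon|$ (using $|J'(z)|\le 2|z|$), so $\sum_{i\notin I_n^j}\varepsilon|\phi_n|^2\le 4\sum_{i\notin I_n^j}\varepsilon((u_{i+1}-u_i)/\varepsilon)^2\le 4\,C F_n(v_n(\cdot,t))$ is bounded by \eqref{weakbound}. For $i\in I_n^j$ one uses instead that $|J'(z)|\le 2/|z|$ for $z$ large, hence $|f_\varepsilon'(u)|\le \frac{2}{\varepsilon|\log\varepsilon|\,|u|}\le\frac{2}{\sqrt{\varepsilon|\log\varepsilon|}}$ on that regime (where $|u|\ge 1/\sqrt{\varepsilon|\log\varepsilon|}$); combined with the bound $\#(I_n^j)\le C|\log\varepsilon|$ from \eqref{crescitalogaritmica}, one gets $\sum_{i\in I_n^j}\varepsilon|\phi_n|^2\le \varepsilon\cdot C|\log\varepsilon|\cdot\frac{4}{\varepsilon|\log\varepsilon|}=4C$, also bounded. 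Hence $\|\phi_n(\cdot,t)\|_{L^2(0,1)}\le C'$ uniformly, giving the ``moreover'' part together with the lower-semicontinuity statement $u_x\in L^2$ inherited from Theorem \ref{thm1} (the uniform-in-$t\in[0,T]$ and joint $L^2((0,1)\times(0,T))$ statements follow by integrating in $t$ and Fatou).

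Next I would identify the limit. Fix a test function $\psi\in C_c^\infty(0,1)$. Split $\int_0^1\phi_n\psi\,dx$ into the sum over $i\notin I_n^j$ and over $i\in I_n^j$. For the bad part, by Cauchy--Schwarz and the estimate above, $|\sum_{i\in I_n^j}\int\phi_n\psi|\le \|\psi\|_\infty\cdot\#(I_n^j)\cdot\varepsilon\cdot\frac{2}{\sqrt{\varepsilon|\log\varepsilon|}}\le C\|\psi\|_\infty\sqrt{\varepsilon|\log\varepsilon|}\to 0$. For the good part, write $\phi_n=2(v_n)_{i+1}-2(v_n)_i)/\varepsilon\cdot\frac{1}{1+\varepsilon|\log\varepsilon|((u_{i+1}-u_i)/\varepsilon)^2}$; on $i\notin I_n^j$ the denominator is $1+o(1)$ uniformly (indeed it is between $1$ and $2$), so $\phi_n=2(\tilde v_n)_x(1+r_n)$ with $\|r_n\|_{L^\infty}\to 0$ on this set, while $2(\tilde v_n)_x\mathbf{1}_{i\notin I_n^j}\rightharpoonup 2u_x$ in $L^2$ by Theorem \ref{thm1}(2) (the characteristic functions of the bad set tend to $0$ in $L^1$ since $\varepsilon\#(I_n^j)\le C\varepsilon|\log\varepsilon|\to 0$, and the gradients are $L^2$-equibounded, so removing them does not affect the weak limit). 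Therefore $\int_0^1\phi_n\psi\,dx\to\int_0^1 2u_x\psi\,dx$, and since $\{\phi_n(\cdot,t)\}$ is $L^2$-bounded this weak-$*$ convergence against smooth test functions upgrades to weak $L^2$-convergence, proving $\phi_n(\cdot,t)\rightharpoonup 2u_x(\cdot,t)$.

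The main obstacle is the careful treatment of the ``bad'' set $I_n^j$: unlike in the Blake--Zisserman case of \cite{cinque}, here $\#(I_n^j)$ is not bounded but only grows like $|\log\varepsilon_n|$, so one cannot simply discard those indices as a finite jump set. The key quantitative point is that the two competing bounds — $|f_\varepsilon'|\le\frac{2}{\sqrt{\varepsilon|\log\varepsilon|}}$ per bad index, and $\#(I_n^j)\le C|\log\varepsilon|$ — combine to give a contribution of order $\sqrt{\varepsilon|\log\varepsilon|}$ in the weak formulation and of order $1$ (but still bounded) in $L^2$; making this trade-off precise, and checking it is uniform in $t\in[0,T]$ so that the final statements about $L^\infty_t L^2_x$ and $L^2_{t,x}$ hold, is where the real work lies.
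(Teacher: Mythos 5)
Your overall strategy is the same as the paper's: decompose $\phi_n(\cdot,t)$ into the contributions of indices above and below the convexity threshold, obtain a uniform $L^2$-bound, kill the supercritical part by combining the per-index bound $|f_\e'|\le C/\sqrt{\e|\log\e|}$ with the counting estimate $\#(I^j_n)\le C|\log\e_n|$ from \eqref{crescitalogaritmica} (so that its $L^1$-norm is $O(\sqrt{\e|\log\e|})$ while its $L^2$-norm stays bounded), and identify the subcritical part with $2(\tilde v_n)_x$, whose weak $L^2$-limit is $u_x$ by Theorem \ref{thm1}. Those parts of your argument, as well as the uniform-in-$t$ bound and the $L^2((0,1)\times(0,T))$ statement via Fatou, are fine and match the paper (which phrases the bad-part step as $\chi_n\phi_n\rightharpoonup 0$ in $L^2$, using exactly your two estimates).

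There is, however, a genuine flaw in your treatment of the subcritical part. You write $\phi_n=2(\tilde v_n)_x(1+r_n)$ with $1+r_n=\bigl(1+\e|\log\e|\,(\tilde v_n)_x^2\bigr)^{-1}$ and claim $\|r_n\|_{L^\infty}\to 0$ on the good set because the denominator ``is between $1$ and $2$''. Being between $1$ and $2$ does not make it $1+o(1)$: on indices not in $I^j_n$ one only knows $|(\tilde v_n)_x|\le 1/\sqrt{\e|\log\e|}$, and for gradients comparable to this threshold the denominator is close to $2$, i.e.\ $r_n\approx -1/2$ uniformly in $n$. So the step as written fails. The repair is to control the error in $L^1$ rather than $L^\infty$, which is precisely what the paper does via the Taylor expansion $f_\e'(v)=2v+\tfrac12\sqrt{\e|\log\e|}\,J'''(\xi)\,v^2$: on the good set one has $|\phi_n-2(\tilde v_n)_x|\le 2\,\e|\log\e|\,|(\tilde v_n)_x|^3\le 2\sqrt{\e|\log\e|}\,(\tilde v_n)_x^2$, hence $\int_0^1(1-\chi_n)\,|\phi_n-2(\tilde v_n)_x|\,dx\le C\sqrt{\e|\log\e|}\to 0$ by the equiboundedness in \eqref{weakbound}; combined with the $L^2$-boundedness of $\phi_n(\cdot,t)$ and $(\tilde v_n)_x(\cdot,t)\rightharpoonup u_x(\cdot,t)$, this identifies the weak $L^2$-limit as $2u_x(\cdot,t)$. (Alternatively, note by Chebyshev that the set where $\e|\log\e|(\tilde v_n)_x^2\ge\delta$ has measure at most $C\e|\log\e|/\delta$ and estimate its contribution separately, but the $L^1$ remainder bound is shorter.) With this correction your proof coincides with the paper's.
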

\begin{proof}[Proof.] Let $t\geq 0$ fixed, $v_n:=v_{\varepsilon_n}$ and $\tilde{v}_n:=\tilde{v}_{\varepsilon_n}$ be as defined in Theorem \ref{thm1}. Consider the function
\begin{align*}
\chi_n(x)=\begin{cases}
1 \ \ &\mbox{if } \ \ x\in \bigcup_{i\in I_{\varepsilon}^j(v_n(\cdot,t))}\varepsilon[i,i+1)\\
0 & \mbox{otherwise}
\end{cases}
\end{align*}
and the decomposition $\phi_n(\cdot,t)=\chi_n \phi_n(\cdot,t)+(1-\chi_n)\phi_n(\cdot,t)$. From \eqref{crescitalogaritmica} and Proposition \ref{properties} we get that
\begin{align*}
\int_0^1 \vert\chi_n\phi_n(x,t)\vert^2 dx=\sum_{i\in I^j_{\varepsilon}} \varepsilon \vert\phi_n(i\varepsilon,t)\vert^2
\leq\varepsilon \#\,( I^j_{\varepsilon}) f_{\varepsilon}'\Bigl(\frac{1}{\sqrt{\3}}\Bigr)^2\leq M, 
 \end{align*}
 which means that the sequence is $L^2$-bounded. Moreover,
 \begin{align}
 \int_0^1 \vert\chi_n\phi_n(x,t)\vert dx
\leq\varepsilon \#\,( I^j_{\varepsilon}) f_{\varepsilon}'\Bigl(\frac{1}{\sqrt{\3}}\Bigr)\leq M \sqrt{\3}\to 0.
 \end{align}
 This prove that
 \begin{align*}
 \chi_n\phi_n(x,t)\rightharpoonup 0 \ \ \mbox{in } \ L^2(0,1).
 \end{align*}
 
 We now obtain a similar result for $(1-\chi_n)\phi_n(\cdot,t)$. First at all we observe that
 \begin{align}
 \label{vn}
 (\tilde{v}_n)_x(x,t)= \begin{cases}
 \dfrac{(u_{\varepsilon,\tau}^k)_{i+1}-(u_{\varepsilon,\tau}^k)_i}{\varepsilon} & x\in [i\varepsilon,(i+1)\varepsilon) \mbox{ and } i\notin I^j_{\varepsilon}\\[1.5ex]
 0 & \mbox{ otherwise }
 \end{cases}
 \end{align}
 This means that $(1-\chi_n)\phi_n(x,t)=f_{\varepsilon}'((\tilde{v}_n)_x(x,t))$. Using a Taylor expansion of $f_{\varepsilon}'$ in a neighbourhood of the origin we get
 \begin{align*}
 f_{\varepsilon}'(\tilde{v}_n)_x(x,t))=f_{\varepsilon}'(0)+f_{\varepsilon}''(0)(\tilde{v}_n)_x(x,t)+\frac{1}{2} f_{\varepsilon}'''(\xi_n)((\tilde{v}_n)_x(x,t))^2 
 \end{align*}
 for some $\xi_n\in [0,(\tilde{v}_n)_x(x,t)]$, so that
 \begin{align}
 \label{form}
 f_{\varepsilon}'((\tilde{v}_n)_x(x,t))= 2(\tilde{v}_n)_x(x,t)+\frac{1}{2}\sqrt{\3}J'''(\sqrt{\3}\xi_n)((\tilde{v}_n)_x(x,t))^2 .
 \end{align}
Moreover, recalling \eqref{vn}, we have
\begin{align*}
-\frac{1}{\sqrt{\3}}\leq (\tilde{v}_n)_x(x,t) \leq \frac{1}{\sqrt{\3}},
\end{align*}
so that the sequence $\sqrt{\3}(\tilde{v}_n)_x(x,t)$ is bounded, as is $J'''(\sqrt{\3}\xi_n)$. From this, it follows that there exists a constant $C>0$ such that
$$\vert f_{\varepsilon}'((\tilde{v}_n)_x(x,t))\vert\leq C \vert(\tilde{v}_n)_x(x,t)\vert.$$

Now, fix $T>0$ and $t\in [0,T]$.
We proved in \eqref{weakbound} that $(\tilde{v}_n)_x(x,t)$ is bounded in $L^2(0,1)$ and from the above inequality this implies that also $f_{\varepsilon}'((\tilde{v}_n)_x(x,t))$ is bounded in the same space. So there exists at least a subsequence weakly converging in $L^2(0,1)$. We will show that the entire sequence is weakly convergent, i.e.
\begin{align*}
f_{\varepsilon}'((\tilde{v}_n)_x(x,t))\rightharpoonup 2 u_x(x,t) \ \ \ \mbox{in } \ L^2(0,1).
\end{align*}
Recalling now Theorem \ref{thm1}, we observe that in \eqref{form} the right-hand side is weakly converging to $ 2 u_x(x,t)$ in $L^1(0,1)$. Indeed, notice that $J'''(0)=0$ and $(\tilde{v}_n)_x(x,t)$ is equibounded in $L^2(0,1)$.
Hence, we can conclude that 
\begin{align*}
\phi_n(x,t) \rightharpoonup 2 u_x(x,t)\ \ \ \mbox{in } \ L^2(0,1).
\end{align*}
Finally we have that 

$\bullet$ $\chi_n \phi_n$ is uniformly bounded in $L^2(0,1)$;

$\bullet$ $(1-\chi_n)\phi_n$ is itself uniformly bounded because it is $f_{\varepsilon}'((\tilde{v}_n)_x(x,t))$.

\noindent This means that also $\phi_n(x,t)$ is uniformly bounded  in $L^2(0,1)$.
\end{proof}

We can improve the result above. In particular, we may deduce which boundary conditions are satisfied by the weak-limit of $\phi_n(x,t)$. To that end, in the following, we extend definition \eqref{ext1} by setting
\begin{align}
\label{est}
(u_{\varepsilon,\tau}^k)_i=
\begin{cases}
(u_{\varepsilon,\tau}^k)_0 \ \ \ &\mbox{if } \ i\in\Z, \ i<0\\
(u_{\varepsilon,\tau}^k)_N \ \ \ &\mbox{if } \ i\in\Z, \ i>N.
\end{cases}
\end{align}

\begin{thm} Consider a sequence of function $v_n$ as defined in Theorem \ref{thm1}. Let $u$ be its strong limit in $L^2(0,1)$, then

{\em 1)} $u_x(\cdot,t)\in H^1(0,1)$ for a.e.~$t\geq 0$ and $(u_x)_x \in L^2((0,1)\times(0,T))$ for every $T>0$;

{\em 2)} for a.e.~$t\geq 0$ the function $u$ satisfies the boundary conditions
\begin{align*}
u_x(0,t)=u_x(1,t)=0 \ \mbox{and} \ u_x(\cdot,t)=0 \ \mbox{on } \ S(u(\cdot,t)).
\end{align*}
\end{thm}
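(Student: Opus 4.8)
The plan is to turn the discrete Euler--Lagrange system of Proposition~\ref{optimality} into a weak form of the limit equation and then read off both claims from it. Writing $i=\lfloor x/\varepsilon\rfloor$, $k=\lfloor t/\tau\rfloor$ and extending $u^k$ by \eqref{est}, introduce the discrete velocity $\psi_n(x,t):=\frac{1}{\tau}\big((u^k_{\varepsilon,\tau})_i-(u^{k-1}_{\varepsilon,\tau})_i\big)$. Summing Proposition~\ref{properties}(2) over $k$ and using \eqref{ip2} together with $F_\varepsilon\ge0$ gives $\int_0^T\int_0^1|\psi_n|^2\le2M$ (the $N$-th cell included) for every $T>0$, so along the subsequence of Theorem~\ref{thm1} (and a diagonal argument in $T$) we may assume $\psi_n\rightharpoonup\psi$ in $L^2((0,1)\times(0,T))$. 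In terms of $\phi_n$ from \eqref{fin}, the interior equation reads $\phi_n(i\varepsilon,t)-\phi_n((i-1)\varepsilon,t)=\varepsilon\,\psi_n(i\varepsilon,t)$ for $1\le i\le N-1$, and the two boundary equations give $\phi_n$ on the first and last cell equal to $\pm\,\varepsilon\psi_n$ there. For a test function $\varphi\in C^\infty([0,1]\times(0,T))$, an exact Abel summation in $x$ (using $\int_{i\varepsilon}^{(i+1)\varepsilon}\phi_n(i\varepsilon,t)\partial_x\varphi\,dx=\phi_n(i\varepsilon,t)[\varphi((i+1)\varepsilon,t)-\varphi(i\varepsilon,t)]$ and the interior identity) rewrites $\int_0^1\phi_n\partial_x\varphi\,dx$ as the boundary contribution $\phi_n((N-1)\varepsilon,t)\varphi(1,t)-\phi_n(0,t)\varphi(0,t)$ minus $\int_0^1\psi_n\varphi^\sharp_n\,dx$, with $\varphi^\sharp_n\to\varphi$ in $L^2$. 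The uniform $L^2$-bound in Proposition~\ref{3.3} upgrades the weak convergence there to $\phi_n\rightharpoonup 2u_x$ in $L^2((0,1)\times(0,T))$; choosing $\varphi$ first with compact support in $x$ kills the boundary contribution and gives $\partial_x(2u_x)=\psi$ in $\mathcal{D}'((0,1)\times(0,T))$. Since $\psi\in L^2$, this yields $u_x(\cdot,t)\in H^1(0,1)$ for a.e.\ $t$ and $(u_x)_x=\psi/2\in L^2((0,1)\times(0,T))$, which is~(1). (A time-discrete integration by parts also identifies $\psi=u_t$, so that $u$ solves $u_t=2u_{xx}$ off $S(u(\cdot,t))$.)

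For the Neumann conditions at $x=0,1$, the two boundary equations of Proposition~\ref{optimality} together with $\int_0^T\int_0^1|\psi_n|^2\le2M$ give $\int_0^T|\phi_n(0,t)|^2dt=\varepsilon^2\int_0^T|\psi_n(0,t)|^2dt\le2M\varepsilon\to0$, and likewise $\phi_n((N-1)\varepsilon,\cdot)\to0$ in $L^2(0,T)$. Hence, keeping a general $\varphi$ in the Abel identity, the boundary terms disappear in the limit and $\int_0^T\int_0^1 2u_x\,\partial_x\varphi=-\int_0^T\int_0^1\psi\varphi$ for all such $\varphi$; integrating the left-hand side by parts in $x$ (licit since $u_x(\cdot,t)\in H^1$) and using $\partial_x(2u_x)=\psi$ leaves $\int_0^T[2u_x(1,t)\varphi(1,t)-2u_x(0,t)\varphi(0,t)]\,dt=0$ for every $\varphi$, whence $u_x(0,\cdot)=u_x(1,\cdot)=0$ a.e.

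The remaining and most delicate point is that $u_x(\cdot,t)=0$ on $S(u(\cdot,t))$. Fix $t$ in the full-measure set where the above holds and where, by Fatou applied to $\int_0^T\int_0^1|\psi_n|^2\le2M$, $\|\psi_n(\cdot,t)\|_{L^2(0,1)}$ is bounded along a subsequence $n_j$. Along $n_j$ the interior identity shows that the piecewise-affine interpolant of $\{\phi_{n_j}(i\varepsilon,t)\}$ is bounded in $H^1(0,1)$ (its derivative is controlled by $\|\psi_{n_j}(\cdot,t)\|_{L^2}$, and $\phi_{n_j}(\cdot,t)$ is $L^2$-bounded by Proposition~\ref{3.3}); since $\max_i\varepsilon|\psi_{n_j}(i\varepsilon,t)|\le\sqrt{\varepsilon}\,\|\psi_{n_j}(\cdot,t)\|_{L^2}\to0$, the weak limit $2u_x(\cdot,t)$ of Proposition~\ref{3.3} is in fact attained \emph{uniformly} on $[0,1]$ (up to a further subsequence; the uniform limit is the continuous representative of $2u_x(\cdot,t)$). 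Now $S(u(\cdot,t))$ is finite since $u(\cdot,t)$ is piecewise-$H^1$; for $\bar x\in S(u(\cdot,t))$, Lemma~\ref{lemma3} (applicable since $v_{n_j}(\cdot,t)$ meets the hypotheses of Lemma~\ref{lemma2}) produces, along a further subsequence, jump cells $i_{n_j}$ with $i_{n_j}\varepsilon\to\bar x$ and $\big|(v_{n_j})_{i_{n_j}+1}-(v_{n_j})_{i_{n_j}}\big|\ge\gamma>0$. Because $f_\varepsilon'(u)=\tfrac{1}{\sqrt{\3}}J'(\sqrt{\3}\,u)$, $|J'(w)|\le2/|w|$ and $\sqrt{\3}\,\1=|\log\varepsilon|$, the elongation being $\ge\gamma$ forces $\big|\phi_{n_j}(i_{n_j}\varepsilon,t)\big|\le 2/(\gamma|\log\varepsilon|)\to0$; but uniform convergence and continuity of $2u_x(\cdot,t)$ give $\phi_{n_j}(i_{n_j}\varepsilon,t)\to 2u_x(\bar x,t)$, so $u_x(\bar x,t)=0$. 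Running over the finitely many points of $S(u(\cdot,t))$ finishes~(2). The only genuine difficulty is concentrated here: it combines the upgrade of the weak convergence of Proposition~\ref{3.3} to a uniform one with the decay of $\phi_n$ at the jump cell --- the analytic incarnation of the Dissipation Principle, the ``force'' $f_\varepsilon'$ transmitted across an over-threshold elongation being $O(1/|\log\varepsilon|)$ because $J'(w)\sim2/w$ at infinity --- and it is handled exactly as in \cite{cinque}, the rest being routine discrete-to-continuum bookkeeping.
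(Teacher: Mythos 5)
Your proof is correct and follows essentially the same route as the paper's: the telescoped dissipation estimate of Proposition \ref{properties}(2) combined with the optimality conditions of Proposition \ref{optimality} gives the $L^2$ control of the discrete spatial derivative of $\phi_n$ (your $\psi_n$ equals the paper's $(\widetilde\phi_n)_x$), Fatou selects good times, the embedding $H^1(0,1)\subset\subset C([0,1])$ upgrades Proposition \ref{3.3} to uniform convergence of the interpolants, and Lemma \ref{lemma3} together with $J'(w)\sim 2/w$ forces $\phi_n\to 0$ at the over-threshold cells, hence $u_x=0$ on $S(u(\cdot,t))$. The only immaterial deviation is at the endpoints: you extract $u_x(0,t)=u_x(1,t)=0$ from the boundary optimality conditions and an Abel-summation weak formulation, whereas the paper uses the extension \eqref{est} to $\R$ so that the limit of $\widetilde\phi_n$ is an $H^1$ function vanishing outside $(0,1)$; both arguments are sound.
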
 
\begin{proof}[Proof.]
Let ${\widetilde\phi}_n$ be the linear interpolation  of the function $\phi_n$ defined in \eqref{fin}. Our first claim is that ${\widetilde\phi}_n\rightharpoonup J''(0)u_x(x,t)$ in $H^1(0,1)$.

We recall that, from Proposition \ref{properties}, it holds
\begin{align*}
\sum_{i=0}^N \varepsilon \vert(u_{\varepsilon,\tau}^k)_i-(u_{\varepsilon,\tau}^{k-1})_i\vert^2\leq 2\tau [F_{\varepsilon}(u_{\varepsilon,\tau}^{k-1})-F_{\varepsilon}(u_{\varepsilon,\tau}^k)],
\end{align*} 
so that,  fixing $T>0$ and denoting $N_{\tau}=\lfloor T/\tau\rfloor$, we have
\begin{align*}
\sum_{k=1}^{N_{\tau}}\sum_{i=0}^{N_\varepsilon} \tau \varepsilon \vert(u_{\varepsilon,\tau}^k)_i-(u_{\varepsilon,\tau}^{k-1})_i\vert^2\leq 2\tau^2 F_{\varepsilon}(u_{\varepsilon}^0)\leq 2\tau^2 M. 
\end{align*}
Using the optimality conditions in Proposition \ref{optimality} and the extension \eqref{est}, we get\begin{align*}
\sum_{k=1}^{N_{\tau}}\tau \sum_{i\in \Z}\varepsilon\tau^2 \Bigl[\frac{1}{\varepsilon}\Bigl(f_{\varepsilon}'\bigl(\frac{(u_{\varepsilon,\tau}^k)_{i+1}-(u_{\varepsilon,\tau}^k)_i}{\varepsilon}\bigr)-f_{\varepsilon}'\bigl(\frac{(u_{\varepsilon,\tau}^k)_i-(u_{\varepsilon,\tau}^k)_{i-1}}{\varepsilon}\bigr)\Bigr)\Bigr]^2\leq 2\tau^2 M\,.
\end{align*}
Taking the extension by linear interpolation ${\widetilde\phi_n}$ on $I_{\varepsilon}$ into account, we rewrite the previous estimate as
\begin{align*}
\sum_{k=1}^{N_{\tau}}\tau \int_{\R}[({\widetilde\phi_n})_x(x,k\tau)]^2\,dx\leq 2M,
\end{align*}
so that for $\delta>0$ and $\tau<\delta$ we obtain
\begin{align*}
\int_{\delta}^Tdt\int_{\R}[({\widetilde\phi_n})_x(x,k\tau)]^2\,dx\leq 2M
\end{align*}
and 
\begin{align*}
\liminf_{n\to +\infty}\int_{\delta}^Tdt\int_{\R}[({\widetilde\phi_n})_x(x,k\tau)]^2\,dx\leq 2M.
\end{align*}
By Fatou's Lemma
\begin{align*}
\int_{\delta}^T\Bigl(\liminf_{n\to +\infty}\int_{\R}[({\widetilde\phi_n})_x(x,k\tau)]^2\,dx\Bigr)dt\leq 2M;
\end{align*}
in particular this means that
\begin{align}
\liminf_{n\to +\infty}\int_{\R}[({\widetilde\phi_n})_x(x,k\tau)]^2\,dx< \infty \ \ \ \mbox{ for a.e. } t\in [\delta,T].
\end{align}
Let $t$ be such that the previous inequality holds and consider a subsequence $({\widetilde\phi_{n_k}})_x(x,k\tau)$ such that
\begin{align*}
\liminf_{k\to +\infty}\int_{\R}[({\widetilde\phi_{n_k}})_x(x,k\tau)]^2\,dx=\liminf_{n\to +\infty}\int_{\R}[({\widetilde\phi_n})_x(x,k\tau)]^2\,dx.
\end{align*}
Then there exists $C$ independent of $k$ such that 
\begin{align}
\label{30}
\int_{\R}[({\widetilde\phi_{n_k}})_x(x,k\tau)]^2\,dx\leq C.
\end{align}
We recall that, by Proposition \ref{3.3}, in $L^2(0,1)$ we have
\begin{align*}
\phi_n(\cdot,t)\rightharpoonup \phi(\cdot,t)=
\begin{cases} J''(0)u_x(\cdot,t) \ \ \ \ &\mbox{in } (0,1)\\
0 &\mbox{otherwise}.
\end{cases}
\end{align*} 
The same result also holds for $\widetilde\phi_{n_k}(\cdot,t)$ observing that from \eqref{30}  we get
\begin{align*}
\sum_{i\in\Z:\varepsilon i\in I_{\varepsilon}}\varepsilon|\phi_{n_k}((i+1)\varepsilon,t)-\phi_{n_k}(i\varepsilon,t)|^2\leq \varepsilon^2C.
\end{align*}
Moreover, the $L^2$-weak-convergence of ${\widetilde\phi_{n_k}}$, the boundedness proved in \eqref{30} and an integration by parts argument show that, for any open interval $I\subset [0,1]$,  $\phi\in H^1(I)$ and
\begin{align}
({\widetilde\phi_{n_k}})_x(\cdot,t)\rightharpoonup \phi_x(\cdot,t) \ \ \ \mbox{in } L^2(0,1).
\end{align}
The above observation and convergence result prove that $u_x\in H^1(0,1)$ with $u_x(0,t)=u_x(1,t)=0$ for almost every $t\geq 0$. 

Now, we want to show that $u_{xx}\in L^2((0,1)\times(0,T))$: since
\begin{align*}
\int_0^1 [\phi_x(x,t)]^2\,dx\leq \liminf_{k\to +\infty}\int_{\R}[({\widetilde\phi_{n_k}})_x(x,t)]^2\,dx=\liminf_{n\to +\infty}\int_{\R}[({\widetilde\phi_{n}})_x(x,t)]^2\,dx
\end{align*}
then for every $\delta>0$
\begin{align*}
\int_{\delta}^T dt\int_0^1 [\phi_x(x,t)]^2\,dx\leq\int_{\delta}^T \Bigl(\liminf_{n\to +\infty}\int_{\R}[({\widetilde\phi_{n}})_x(x,t)]^2\,dx\Bigr) dt\leq 2M.
\end{align*}

To conclude we want to understand which values  $u_x(\cdot,t)$ attains on $S(u)$.
Let $t$ be such that \eqref{30} still holds. Observing that $H^1(0,1)\subset\subset C([0,1])$, we have
\begin{align*}
{\widetilde\phi_{n_k}}(x,t) \rightarrow 2u_x(\cdot,t)\ \ \ \ \ \mbox{ in } C([0,1]).
\end{align*}
Now if $\bar{x}\in S(u)$, thanks to Lemma \ref{lemma3} we know that there exists a sequence $\{x^n\}$ converging to $x$ and such that for every $n$ 
\begin{align*}
x^n\in S(\tilde{v}_n(\cdot,t)) \ \ \ \mbox{ and } |\tilde{v}_n^+(x^n,t)-\tilde{v}_n^-(x^n,t)|\geq \gamma >0.
\end{align*}
We observe that, if  $x^n=i_n\varepsilon\in S(\tilde{v}_n(\cdot,t))$, then  $(i_n-1)\in I^j_{\varepsilon}(v_n(\cdot,t))$, so that we can write
\begin{align*}
|{\widetilde\phi}_n((i_n-1)\varepsilon,t)|&=|\phi_n((i_n-1)\varepsilon,t)|=f_{\varepsilon}'\Bigl(\frac{|\tilde{v}^+_n(x^n,t)-\tilde{v}^-_n(x^n,t)|}{\varepsilon}\Bigr)\\
&\leq f_{\varepsilon}'\Bigl(\frac{\gamma}{\varepsilon}\Bigr)=\frac{1}{\sqrt{\3}}J'\Bigl(\1\gamma\Bigr).
\end{align*}
Recalling that $J'(z)=\frac{2z}{1+z^2}$,  a simple calculation shows that
\begin{align*}
|{\widetilde\phi}_n((i_n-1)\varepsilon,t)|\leq \frac{2\gamma}{\varepsilon+\gamma^2\vert\log\varepsilon\vert}\rightarrow 0 \ \mbox{if} \ \varepsilon_n\to 0.
\end{align*}
Now the uniform convergence of ${\widetilde\phi}_n(\cdot,t)$ to $J''(0)u_x(\cdot,t)$  implies that $u_x(\bar{x},t)=0$.
\end{proof}

We conclude this section collecting all the previous results to obtain the limit equation satisfied by a minimizing movement of functional \eqref{pm} (see the analog result in \cite{cinque}).
\begin{thm}
\label{equazionefinale}
Let $\{u^0_{\varepsilon}\}_{\varepsilon}$ be a sequence of functions which satisfies \eqref{ip1} and \eqref{ip2}  and let $v_n=u_{\varepsilon_n,\tau_n}$ be a sequence converging to $u$ as in Theorem $\ref{thm1}$. Then we have
\begin{align}
u_t=2\,u_{xx}
\end{align}
in the distributional sense in $(0,1)\times (0,+\infty)$. Moreover
\begin{align*}
&u(.,0)=u^0 \ \ \mbox{a.e. in } (0,1)\\
&u_x(.,t)=0 \ \ \mbox{on } S(u(.,t))\cup (0,1) \ \mbox{ for a.e.} \ t\geq 0,
\end{align*}
where $u^0$ is the a.e.-limit of the sequence $\{u^0_{\varepsilon}\}_{\varepsilon}$ .
\end{thm}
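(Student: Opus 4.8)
The plan is to pass to the limit, tested against a smooth function, in the discrete Euler--Lagrange equations of Proposition~\ref{optimality}, using the convergences already collected in Theorem~\ref{thm1} and Proposition~\ref{3.3}, and then to invoke the regularity of $u$ proved in the previous theorem to read off the equation and the boundary conditions. Throughout we keep the convention $\e=\e_n$, $\tau=\tau_n$ of this subsection.

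Fix $\varphi\in C_c^\infty((0,1)\times(0,+\infty))$ and write $\varphi_i^k=\varphi(i\e,k\tau)$. Multiplying the interior optimality condition of Proposition~\ref{optimality} by $\tau\varphi_i^k$ and summing over $i$ and $k$ (the boundary indices $i=0,N$ and the extremal values of $k$ contribute nothing, $\varphi$ having compact support) gives an identity $A_n+B_n=0$, where
\begin{align*}
A_n=\sum_{i,k}\e\,\varphi_i^k\bigl((u^k_{\e,\tau})_i-(u^{k-1}_{\e,\tau})_i\bigr),\qquad
B_n=\sum_{i,k}\tau\,\varphi_i^k\Bigl(f_{\e}'\bigl(\tfrac{(u^k)_i-(u^k)_{i-1}}{\e}\bigr)-f_{\e}'\bigl(\tfrac{(u^k)_{i+1}-(u^k)_i}{\e}\bigr)\Bigr).
\end{align*}
A discrete summation by parts in $k$ rewrites $A_n=-\sum_{i,k}\e\tau\,(u^k_{\e,\tau})_i\,\frac{\varphi_i^{k+1}-\varphi_i^k}{\tau}$, and the $L^\infty([0,T];L^2(0,1))$-convergence $v_n\to u$ of Theorem~\ref{thm1} together with the $L^\infty$-bound of Proposition~\ref{properties} gives $A_n\to-\int\!\!\int u\,\varphi_t$. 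For $B_n$, by the definition~\eqref{fin} of $\phi_n$ the bracket equals $\phi_n((i-1)\e,k\tau)-\phi_n(i\e,k\tau)$; a discrete summation by parts in $i$ then yields $B_n=\sum_{i,k}\e\tau\,\phi_n(i\e,k\tau)\,\frac{\varphi_{i+1}^k-\varphi_i^k}{\e}$. Replacing the difference quotient of $\varphi$ by $\varphi_x$, with an error controlled by the uniform $L^1(0,1)$-bound on $\phi_n(\cdot,t)$ coming from Proposition~\ref{3.3}, and using that $\phi_n(\cdot,t)\rightharpoonup 2u_x(\cdot,t)$ in $L^2(0,1)$ for each $t$ with $\|\phi_n(\cdot,t)\|_{L^2(0,1)}$ bounded uniformly for $t\in[0,T]$, dominated convergence in the $t$ variable gives $B_n\to\int\!\!\int 2u_x\,\varphi_x$. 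Hence
\begin{align*}
\int_0^{+\infty}\!\!\int_0^1 u\,\varphi_t\,dx\,dt=\int_0^{+\infty}\!\!\int_0^1 2u_x\,\varphi_x\,dx\,dt\qquad\text{for all }\varphi\in C_c^\infty\bigl((0,1)\times(0,+\infty)\bigr).
\end{align*}

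Since, by the previous theorem, $u_x(\cdot,t)\in H^1(0,1)$ for a.e.~$t$ and $(u_x)_x\in L^2((0,1)\times(0,T))$, an integration by parts in $x$ (the boundary terms vanishing by compact support of $\varphi$, and $S(u)$ playing no role because one integrates by parts the continuous function $u_x$ against $\varphi$, not $u$ itself) rewrites the right-hand side as $-\int\!\!\int 2(u_x)_x\,\varphi$, so that $u_t=2u_{xx}$ holds in the distributional sense on $(0,1)\times(0,+\infty)$; the conditions $u_x(\cdot,t)=0$ on $S(u(\cdot,t))$ and at the endpoints of $(0,1)$ are precisely those established in the previous theorem. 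For the initial datum, summing the estimate of Proposition~\ref{properties}(2) over $k\le\lfloor t/\tau\rfloor$ and applying the Cauchy--Schwarz inequality gives $\|v_n(\cdot,t)-u^0_{\e}\|_{L^2(0,1)}^2\le 2Mt$ uniformly in $n$, with $M$ as in \eqref{ip2}; letting $n\to\infty$ and using $u^0_{\e}\to u^0$ in $L^2(0,1)$ (from the a.e.~convergence and the equiboundedness~\eqref{ip1}) together with the $C^{1/2}$-regularity in time from Theorem~\ref{thm1}, one gets $\|u(\cdot,t)-u^0\|_{L^2(0,1)}\le C\sqrt t$, and therefore $u(\cdot,0)=u^0$.

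The step I expect to be most delicate is the passage to the limit in the flux term $B_n$: one must identify $\phi_n$ as the relevant object, upgrade the fixed-time weak convergence of Proposition~\ref{3.3} to a genuine space--time convergence — which is where the uniform-in-$t$ $L^2$-bound and dominated convergence enter — and keep careful track of the piecewise-constant-in-time structure of the interpolations, so that the discrete time increments in $A_n$ genuinely reconstruct $u_t$. Once the weak formulation above is established, the identification with $u_t=2u_{xx}$ and the validity of the boundary conditions follow at once from the regularity already in hand.
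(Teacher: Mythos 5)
Your argument is correct and is exactly the route the paper intends: the paper itself gives no detailed proof of Theorem \ref{equazionefinale}, merely ``collecting the previous results'' and pointing to the analogous theorem in \cite{cinque}, which is proved precisely by testing the discrete optimality conditions of Proposition \ref{optimality} against smooth compactly supported functions, summing by parts, and passing to the limit via the weak convergence $\phi_n(\cdot,t)\rightharpoonup 2u_x(\cdot,t)$ of Proposition \ref{3.3} together with the regularity and boundary conditions of the preceding theorem. Your treatment of the initial datum via Proposition \ref{properties}(2) and the $C^{1/2}$-in-time estimate likewise matches the standard minimizing-movement argument, so the proposal fills in the details the paper delegates to \cite{cinque} without deviating from its approach.
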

\subsection{Evolution of the singular set}
Let $u^0_{\varepsilon}$ be an initial datum satisfying \eqref{ip1} and \eqref{ip2}, and consider the sequence $\{u_{\varepsilon,\tau}^k\}$ of minimizers of functional \eqref{mm}. In this section we want to understand the behaviour of the set of singular points  $I^j_{\varepsilon}(u_{\varepsilon,\tau}^k)$ with respect to $k$.

We simplify the notation introducing
$$
u^k_i:=(u_{\varepsilon,\tau}^k)_i,\qquad
v^k_i:=\frac{u^k_{i+1}-u^k_i}{\varepsilon}.
$$
Using the optimality condition for $0<i<N-1$, we observe that 
\begin{align*}
v_i^{k+1}-v_i^k&=\frac{1}{\varepsilon}\Bigl(u^{k+1}_{i+1}-u^{k+1}_i-u^k_{i+1}+u^k_i\Bigr)\\
&=\frac{1}{\varepsilon}\Bigl(u^{k+1}_{i+1}-u^k_{i+1}\Bigr)-\frac{1}{\varepsilon}\Bigl( u^{k+1}_i-u^k_i\Bigr)\\
&=\frac{\tau}{\varepsilon^2}\Bigl[f'_{\varepsilon}\Bigl(\frac{u^{k+1}_{i+2}-u^{k+1}_{i+1}}{\varepsilon}\Bigr)+ f'_{\varepsilon}\Bigl(\frac{u^{k+1}_{i}-u^{k+1}_{i-1}}{\varepsilon}\Bigr)-2f'_{\varepsilon}\Bigl(\frac{u^{k+1}_{i+1}-u^{k+1}_{i}}{\varepsilon}\Bigr)\Bigr].
\end{align*}
Rewriting these terms we obtain
\begin{align}
\label{final1}
(v_i^{k+1}-v_i^k)+2\frac{\tau}{\varepsilon^2}f'_{\varepsilon}\Bigl(\frac{u^{k+1}_{i+1}-u^{k+1}_{i}}{\varepsilon}\Bigr)\leq 2\frac{\tau}{\varepsilon^2}\max f'_{\varepsilon}.
\end{align}
Observe that this estimate still holds for $i=0$ or $i=N-1$. In fact, in that case we have
\begin{align*}
(v_i^{k+1}-v_i^k)+2\frac{\tau}{\varepsilon^2}f'_{\varepsilon}\Bigl(\frac{u^{k+1}_{i+1}-u^{k+1}_{i}}{\varepsilon}\Bigr)\leq \frac{\tau}{\varepsilon^2}\max f'_{\varepsilon}\,.
\end{align*}
Consider now the function 
\begin{align*}
h(z)=2\frac{\tau}{\varepsilon^2}f'_{\varepsilon}(z)
\end{align*}
so that \eqref{final1} reads
\begin{align}
(v_i^{k+1}-v_i^k)+h(v_i^{k+1})\leq \max h.
\end{align}
The key point is the following lemma (see \cite{cinque}).
\begin{lem}Let $h:\R\to\R$ be a Lipschitz function with Lipschitz constant $L<1$. Let $\{a_k\}$ be a sequence of real numbers and $C\in \R$ such that
\begin{align*}
a_{k+1}-a_k+h(a_{k+1})\leq h(C) \ \ \ \hbox{for all }k\geq 0.
\end{align*}
Then, if $a_0\leq C$, it holds $a_k\leq C$ for all $k\geq 0$.
\end{lem}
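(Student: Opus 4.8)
The plan is to prove the statement by induction on $k$. The hypothesis gives us, for every $k\geq 0$, the inequality $a_{k+1}-a_k+h(a_{k+1})\leq h(C)$, and we wish to show that $a_0\leq C$ forces $a_k\leq C$ for all $k$. The base case $k=0$ is exactly the assumption $a_0\leq C$. For the inductive step, suppose $a_k\leq C$; I would argue by contradiction, assuming $a_{k+1}>C$ and deriving a violation of the recursive inequality using the Lipschitz bound $L<1$.

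Concretely, first I would rewrite the hypothesis as
\begin{align*}
a_{k+1}-a_k\leq h(C)-h(a_{k+1}).
\end{align*}
Assuming $a_{k+1}>C\geq a_k$, the left-hand side is strictly positive, so $a_{k+1}-a_k>0$. On the other hand, by the Lipschitz property,
\begin{align*}
h(C)-h(a_{k+1})\leq L\,|a_{k+1}-C|=L\,(a_{k+1}-C),
\end{align*}
since $a_{k+1}>C$. Combining, $a_{k+1}-a_k\leq L\,(a_{k+1}-C)$. Now using $a_k\leq C$ we get $a_{k+1}-C\leq a_{k+1}-a_k\leq L\,(a_{k+1}-C)$, i.e.\ $(1-L)(a_{k+1}-C)\leq 0$. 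Since $1-L>0$ this forces $a_{k+1}-C\leq 0$, contradicting $a_{k+1}>C$. Hence $a_{k+1}\leq C$, which closes the induction.

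The argument is essentially a one-line contraction estimate dressed up as an induction, so there is no serious obstacle; the only point requiring a little care is the direction of the inequality $h(C)-h(a_{k+1})\leq L(a_{k+1}-C)$, which uses that $a_{k+1}-C>0$ under the contradiction hypothesis, together with the strict inequality $L<1$ (an $L=1$ Lipschitz constant would not suffice, since then $(1-L)(a_{k+1}-C)\leq 0$ becomes vacuous). I would also remark that the conclusion is applied in the paper with $a_k=v_i^{k+1}$, $C$ the threshold value, and $h(z)=2\frac{\tau}{\varepsilon^2}f_\varepsilon'(z)$, so one separately needs $L<1$ for this $h$, i.e.\ $2\frac{\tau}{\varepsilon^2}\,\mathrm{Lip}(f_\varepsilon')<1$, which holds for $\tau=\tau_\varepsilon$ small enough relative to $\varepsilon$; but that verification belongs to the application of the lemma rather than to its proof.
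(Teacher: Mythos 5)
Your proof is correct: the induction with the contradiction step $a_{k+1}>C$, combined with the Lipschitz estimate $h(C)-h(a_{k+1})\leq L(a_{k+1}-C)$ and the inequality $a_{k+1}-C\leq a_{k+1}-a_k$, cleanly yields $(1-L)(a_{k+1}-C)\leq 0$, and your remark on where $L<1$ (and hence the time-step restriction \eqref{condizionefinale}) enters is exactly the right caveat. The paper itself gives no proof of this lemma, deferring to \cite{cinque}; your argument is precisely the standard contraction-type estimate used there, so there is nothing to add.
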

First we prove that $h$ is a Lipschitz function with $L<1$:
\begin{align*}
h(z_1)-h(z_2)= 2\frac{\tau}{\varepsilon^2}\Bigl(f'_{\varepsilon}(z_1)-f'_{\varepsilon}(z_2)\Bigr)\leq 2\frac{\tau}{\varepsilon^2}f''_{\varepsilon}(\xi)(z_1-z_2) \ \ \ \ \xi \in (z_1,z_2)
\end{align*}
so  we have to require that
\begin{align}
\label{condizionefinale}
2\frac{\tau}{\varepsilon^2}\max f''_{\varepsilon}=2\frac{\tau}{\varepsilon^2}\max J''=4\frac{\tau}{\varepsilon^2}<1
\end{align} 
When this condition is satisfied, choosing $C=1/\sqrt{\3}$, we have the following result.
\begin{prop} If \eqref{condizionefinale} holds, then for every $k\geq 0$
\begin{align*}
I^j_{\varepsilon}(u^{k+1}_{\varepsilon,\tau})\subseteq I^j_{\varepsilon}(u^{k}_{\varepsilon,\tau})
\end{align*}
\end{prop}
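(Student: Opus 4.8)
The plan is to fix a spatial index $i$ and apply the comparison lemma stated above to the sequence $a_k:=v_i^k$, with the constant $C=1/\sqrt{\3}$. The key observation is that $C$ is exactly the convexity threshold of $f_\varepsilon$: since $f_\varepsilon(z)=\frac{1}{\3}J(\sqrt{\3}\,z)$ and $J$ is convex precisely on $[-1,1]$, one has $i\in I^j_\varepsilon(u)$ if and only if $|v_i|>C$. Hence it suffices to prove that $|v_i^{k}|\le C$ implies $|v_i^{k+1}|\le C$.

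First I would record the three elementary facts about $J$ that make the constants fit. Since $J'(z)=2z/(1+z^2)$ attains its maximum $1$ at $z=1$, we have $\max f_\varepsilon'=f_\varepsilon'(C)=1/\sqrt{\3}$, so the right-hand side $2\frac{\tau}{\varepsilon^2}\max f_\varepsilon'$ of \eqref{final1} equals exactly $h(C)$ for this choice of $C$; moreover, since $J$ is even, $f_\varepsilon'$ and hence $h$ are odd, so $\min h=h(-C)$ and the very computation producing \eqref{final1} also yields the mirrored inequality $(v_i^{k+1}-v_i^k)+h(v_i^{k+1})\ge h(-C)$. Finally $|J''|\le J''(0)=2$ gives $\max|f_\varepsilon''|=2$, so $h$ is Lipschitz with constant $L=4\tau/\varepsilon^2<1$ by \eqref{condizionefinale}; the endpoint indices $i=0,N-1$ are harmless, since there \eqref{final1} holds with a strictly smaller constant on the right and hence a fortiori with $h(C)$.

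Next, fix $k_0\ge 0$ and apply the lemma to the shifted sequence $a_m:=v_i^{k_0+m}$, which satisfies $a_{m+1}-a_m+h(a_{m+1})\le h(C)$ for all $m\ge 0$: if $v_i^{k_0}\le C$ then $v_i^{k_0+m}\le C$ for every $m\ge 0$. A symmetric application of the lemma — obtained by passing to $b_m:=-a_m$ and using that $h$ is odd, so that $b_{m+1}-b_m+h(b_{m+1})\le h(C)$ — applied to the mirrored inequality shows that $v_i^{k_0}\ge -C$ implies $v_i^{k_0+m}\ge -C$ for every $m$. Combining the two, $|v_i^{k_0}|\le C\Rightarrow |v_i^{k_0+m}|\le C$ for all $m\ge 0$, i.e. $i\notin I^j_\varepsilon(u^{k_0}_{\varepsilon,\tau})\Rightarrow i\notin I^j_\varepsilon(u^{k_0+m}_{\varepsilon,\tau})$. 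Taking $m=1$ and passing to the contrapositive gives $I^j_\varepsilon(u^{k_0+1}_{\varepsilon,\tau})\subseteq I^j_\varepsilon(u^{k_0}_{\varepsilon,\tau})$; since $k_0$ is arbitrary this is the claim.

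The step that requires the most care is the matching of constants: \eqref{final1} is an inequality, obtained by replacing the neighbouring terms $f_\varepsilon'(v_{i\pm1}^{k+1})$ by $\max f_\varepsilon'$, and it is precisely this loss that forces the use of the comparison lemma rather than a direct monotonicity argument. Consequently the threshold $C$ must be taken at the point where $f_\varepsilon'$ attains its maximum — namely the convexity threshold $1/\sqrt{\3}$ — so that $\max h=h(C)$ and the hypothesis of the lemma is met; any other choice of $C$ would break it. The only further point to watch is that $I^j_\varepsilon$ is defined through $|v_i|$, which is why the argument must be run symmetrically on both sides of $0$ using the oddness of $h$.
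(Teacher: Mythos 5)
Your proof is correct and follows essentially the same route as the paper: the discrete inequality \eqref{final1}, the comparison lemma applied with the choice $C=1/\sqrt{\3}$, and the Lipschitz bound $4\tau/\varepsilon^2<1$ coming from \eqref{condizionefinale}. The only difference is that you make explicit the two-sided bound (via the oddness of $h$ and the mirrored inequality) needed because $I^j_\varepsilon$ is defined through $|v_i|$, together with the boundary indices $i=0,N-1$ — points the paper treats only implicitly.
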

We conclude with the following observations.

Let $u(\cdot, t)$ be a minimizing movement for the scaled Perona-Malik functional, then for every $t\geq 0$ each jump point of $u$ is obtained as limit of a sequence of jump points of the extension $\tilde{v}_{\varepsilon}$ as defined in Theorem \ref{thm1} (notice in particular statement $(3)$). Hence, if we define 
\begin{align}
S:=\Bigl\{x_j\in \R : x_j=\lim_{\varepsilon\to 0}x^{\varepsilon}_j, \ x^{\varepsilon}_j\in S(\tilde{v}_{\varepsilon}) \Bigr\}
\end{align}
we have $S(u(\cdot,t))\subseteq S$ for every $t\geq 0$.

Recalling Theorem \ref{equazionefinale}, we proved that a minimizing movement for functional \eqref{pm2}  satisfies the heat equation with Neumann boundary conditions on $(0,1)\backslash S(u^0)$. This result is the same for the minimizing movement of Mumford-Shah functional \cite{due}, so that the following result holds.

\begin{cor} Let $u_{\varepsilon}^0$ be an initial datum that satisfies \eqref{ip1} and \eqref{ip2}; let $u_{\varepsilon}$ be a minimizing movement for the scaled Perona-Malik functional $F_{\varepsilon}$ as defined in \eqref{pm2}. Then, if $\varepsilon\to 0$ and \eqref{condizionefinale} holds, $u_{\varepsilon}$ converges in $L^{\infty}((0,T);L^2(0,1))$ to a minimizing movement of the Mumford-Shah functional.
\end{cor}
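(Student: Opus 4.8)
The plan is to assemble the results of this section with the known description of minimizing movements for the Mumford--Shah functional. Fix $T>0$ and sequences $\varepsilon_n\to0$, $\tau_n=\tau_{\varepsilon_n}\to0$ satisfying \eqref{condizionefinale}. Since $\{u^0_{\varepsilon_n}\}$ satisfies \eqref{ip1} and \eqref{ip2}, Theorem \ref{thm1} applies: up to a subsequence, $u_{\varepsilon_n,\tau_n}\to u$ and $\tilde v_{\varepsilon_n}\to u$ in $L^\infty([0,T];L^2(0,1))$, with $u\in C^{1/2}([0,+\infty);L^2(0,1))$ and $u(\cdot,t)$ piecewise-$H^1(0,1)$ for every $t$. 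This yields the compactness half of the statement; what remains is to identify $u$ as \emph{the} minimizing movement of $M_s$ from the datum $u^0:=\lim_\varepsilon u^0_\varepsilon$.

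Next I would invoke Theorem \ref{equazionefinale}, by which this limit $u$ solves $u_t=2u_{xx}$ in the distributional sense in $(0,1)\times(0,+\infty)$, with $u(\cdot,0)=u^0$ and homogeneous Neumann conditions both at $x=0,1$ and on $S(u(\cdot,t))$ for a.e.\ $t$. To pin down the domain of this evolution I would use the Proposition on the monotonicity of the singular set: under \eqref{condizionefinale} one has $I^j_{\varepsilon}(u^{k+1}_{\varepsilon,\tau})\subseteq I^j_{\varepsilon}(u^{k}_{\varepsilon,\tau})$ for every $k$, hence $I^j_\varepsilon(u^k_{\varepsilon,\tau})\subseteq I^j_\varepsilon(u^0_\varepsilon)$ for all $k$. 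Combined with Lemma \ref{lemma3} (each jump of the limit is the limit of jump points of the interpolations $\tilde v_\varepsilon$), this forces $S(u(\cdot,t))\subseteq S$ for every $t$, where $S$ is the set of limit points of jumps of $\tilde v_\varepsilon$; and, by the same compactness argument applied at $t=0$ to $\{u^0_\varepsilon\}$, one has $S\subseteq S(u^0)$. Therefore $u$ is a solution of the heat equation with homogeneous Neumann conditions on $(0,1)\setminus S(u^0)$, with datum $u^0$ and with a jump set that is non-increasing in time (hence, in one dimension, frozen and contained in $S(u^0)$).

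Finally I would recall from \cite{due} the characterization of the minimizing movements of the one-dimensional Mumford--Shah functional: starting from $u^0$, they are precisely the solutions of the heat equation with homogeneous Neumann conditions on the connected components of $(0,1)\setminus S(u^0)$, with a jump set that is non-increasing in time. The limit $u$ produced above matches this description verbatim, so it is a minimizing movement of $M_s$. Since this limiting evolution is uniquely determined by $u^0$ — uniqueness for the heat equation with prescribed Neumann data on the fixed one-dimensional domain $(0,1)\setminus S(u^0)$ — the limit does not depend on the extracted subsequence, whence the whole family $u_\varepsilon$ converges to it in $L^\infty((0,T);L^2(0,1))$, which is the assertion.

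The step I expect to be the main obstacle is the control of the singular set in the passage to the limit: one must simultaneously prevent the creation of spurious jumps (handled by the monotonicity Proposition together with Lemma \ref{lemma3}) and ensure that the limiting parabolic problem lives on the same punctured interval as the Mumford--Shah minimizing movement, i.e.\ reconcile $S$ with $S(u^0)$, including the harmless bookkeeping of jumps of $u^0$ that might be instantaneously smoothed out — a possibility equally admissible for the $M_s$ minimizing movement. A secondary, more routine point is the upgrade from subsequential to full convergence, which rests entirely on the uniqueness of the limiting evolution.
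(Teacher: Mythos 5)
Your assembly is essentially the paper's own proof of the corollary: there it is obtained by combining Theorem \ref{thm1} (compactness and convergence in $L^{\infty}([0,T];L^2(0,1))$), Theorem \ref{equazionefinale} (the limit solves $u_t=2u_{xx}$ with homogeneous Neumann conditions at $0,1$ and on $S(u(\cdot,t))$), the proposition on the monotonicity of $I^j_{\varepsilon}(u^k_{\varepsilon,\tau})$ under \eqref{condizionefinale} together with statement (3) of Theorem \ref{thm1} (giving $S(u(\cdot,t))\subseteq S$), and the characterization of minimizing movements of $M_s$ from \cite{due}. On this core your argument and the paper's coincide.

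Two of your added steps overreach, though. First, the inclusion $S\subseteq S(u^0)$ does not follow ``by the same compactness argument applied at $t=0$'': Lemma \ref{lemma3} goes only one way --- every jump of the limit is approximated by discrete jump points whose amplitude stays bounded below by some $\gamma>0$ --- and says nothing about limits of arbitrary discrete jump points. The initial data may contain up to $C\vert\log\varepsilon\vert$ springs above the convexity threshold whose elongations are only of order $\sqrt{\varepsilon/\vert\log\varepsilon\vert}$; their locations can accumulate anywhere in $[0,1]$ without producing any jump of $u^0$, so limit points of $S(\tilde v_{\varepsilon}(\cdot,0))$ need not lie in $S(u^0)$. (The paper itself only records $S(u(\cdot,t))\subseteq S$ and does not assert $S\subseteq S(u^0)$.) Second, the upgrade from subsequential to whole-family convergence via ``uniqueness of the heat equation with Neumann data on $(0,1)\setminus S(u^0)$'' goes beyond what is claimed and is delicate: a minimizing movement is by definition a subsequential limit, and if a jump amplitude vanishes in finite time the continuation (keeping the Neumann condition or healing the crack) is not determined by the fixed-domain Neumann problem you invoke, so uniqueness of the limiting evolution cannot simply be quoted. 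The corollary only asserts that the limits produced by the scheme are minimizing movements of the Mumford--Shah functional, which is exactly what the first part of your argument delivers.
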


\subsection{Long-time behaviour}
In this section we finally remark that, while we concluded that the Perona-Malik energies are approximated by the Mumford-Shah functional as gradient-flow dynamics are concerned, this does not hold for long-time dynamics.

Long-time dynamics can be defined by introducing a time-scaling parameter $\lambda >0$,
and applying a recursive minimizing scheme to the scaled energies. Fixed
 an initial datum $x_0$ we define recursively  $x_k$ as a minimizer for the minimum problem
\begin{align}
\min \left\{\frac{1}{\lambda}F_{\varepsilon}(x)+\frac{1}{2\tau}\Vert x-x_{k-1}\Vert^2\right\}.
\end{align}
Equivalently the same minimum problem can be written as
\begin{align}
\min \left\{F_{\varepsilon}(x)+\frac{\lambda}{2\tau}\Vert x-x_{k-1}\Vert^2\right\}.
\end{align}
so that $x_k$ can be seen as produced by a minimizing movements scheme with time step $\eta=\tau/\lambda$. Now, if $u^{\eta}$ is a discretization over the lattice of time-step $\eta$, we have 
\begin{align*}
u^{\tau}(t):=x_{\lfloor t/\tau \rfloor}=x_{\lfloor t/\lambda\eta \rfloor}=u^{\eta}\Bigl(\frac{t}{\lambda}\Bigr).
\end{align*}
This shows that the introduction of the constant parameter  $\lambda$ is equivalent to a scaling in time.

\smallskip
In order to show that for some time scaling $\lambda$ the sequence $F_\e$ is not equivalent to $M_s$
we consider an initial datum $u_0$ which is a local minimum for $M_s$, so that the corresponding motion is trivial at all scales: $u(t)=u_0$ for all $t$. We then exhibit some $\lambda$ such that the recursive minimization scheme above gives a non-trivial limit evolution.

We consider additional constraints on the domain of $F_\e$ by limiting the test function to local minimizers of $M_s$ with prescribed boundary conditions. More precisely,

$\bullet$ the initial datum $u_0$ is a  a piecewise-constant function with $S(u_0)=\{x_0,x_1\}$ and $0< x_0< x_1 <1$;

$\bullet$ competing functions are non-negative piecewise-constant functions with $S(u_k)\subseteq S(u_0)$;

$\bullet$ boundary conditions read $u(0^-)=0$ and $u(1^+)=1$.

We may describe the minimizers $u_k$ by a direct computation using the minimality conditions:
if $z_k$ is the constant value of $u_k$ on the interval $(x_0,x_1)$, then $z_k$ solves  the equation
\begin{align*}
(x_1-x_0)\frac{z_k-z_{k-1}}{\tau}=-\frac{2}{\lambda}\Bigl(\frac{z_k}{\varepsilon+|\log\varepsilon|z_k^2}+\frac{z_k-1}{\varepsilon+|\log\varepsilon|(z_k-1)^2}\Bigr).
\end{align*}
In order to obtain a non-trivial limit as $\varepsilon,\tau\to 0$,we may choose the scaling 
\begin{equation}\label{ltlambda}
\lambda=\frac{1}{|\log\varepsilon|}.
\end{equation}
 With such a time-scaling, in the limit we get an equation for $z(t)$ of the form
\begin{equation}\label{lteq}
z'=-\frac{2}{(x_1-x_0)}\cdot \frac{1-2z}{z(1-z)}.
\end{equation}
Hence, if the initial datum has the value $z_0\neq1/2$ in the interval $(x_0,x_1)$, the motion is not trivial. We refer to \cite{due} for further examples.

\begin{oss}[equivalent energies for long-time motion]
Note that the time-scaled minimizing-movement scheme along the functionals
$G_\e$ in Section \ref{gammaec} for $\lambda$ as in (\ref{ltlambda}) gives the same limit equation (\ref{lteq}) for the computation above, provided that also $g'(w)\sim {2\over w}$ as $w\to+\infty$. This suggests that $G_\e$ may be considered as a finer approximation carrying on the equivalence to long-time behaviours.
\end{oss}

\end{document}